\documentclass[12pt]{article}
\usepackage{latexsym,amssymb,amsmath,amsfonts,amsthm,dsfont}
\setlength{\textwidth}{6.5in}
\setlength{\textheight}{9in}
\setlength{\topmargin}{-.5in}
\setlength{\oddsidemargin}{.0in}
\setlength{\evensidemargin}{.0in}

\newtheorem{theorem}{Theorem}

\newtheorem{lemma}{Lemma}

\theoremstyle{definition}
\newtheorem{remark}{Remark}

\def\beq{ \begin{equation} }
\def\eeq{ \end{equation} }
\def\mn{\medskip\noindent}

\def\ep{\varepsilon}

\def\square{\vcenter{\vbox{\hrule height .4pt
  \hbox{\vrule width .4pt height 5pt \kern 5pt
        \vrule width .4pt} \hrule height .4pt}}}

\def\EE{\mathbb{E}}
\def\PP{\mathbb{P}}
\def\RR{\mathbb{R}}
\def\ZZ{\mathbb{Z}}

\def\sqz{\kern -0.2em}

\def\clearp{}

\begin{document}

\title{Latent Voter Model on \\ Locally Tree-Like Random Graphs}
\author{Ran Huo and Rick Durrett \thanks{RD is partially supported by NSF grant DMS 1505215 from the probability program.}
\\
Dept. of Math, Duke U., \\ \small P.O. Box 90320, Durham, NC 27708-0320}
\date{}						

\maketitle

\begin{abstract}
In the latent voter model, which models the spread of a technology through a social network, individuals who have just changed
their choice have a latent period, which is exponential with rate $\lambda$,
during which they will not buy a new device. We study site and edge versions of this model on 
random graphs generated by a configuration model in which the degrees $d(x)$ have $3 \le d(x) \le M$. We show that if the number of
vertices $n \to\infty$ and $\log n \ll \lambda_n \ll n$ then the latent voter model has a quasi-stationary state in which
each opinion has probability $\approx 1/2$ and persists in this state for a time that is $\ge n^m$ for any $m<\infty$. 
Thus, even a very small latent period drastically changes the behavior of the voter model.
\end{abstract}

\section{Introduction}

In this paper we will study the latent voter model introduced in 2009 by Lambiotte, Saramaki, and Blondel
 \cite{LSB}. In this model each individual owns one of two types of technology,
say an iPad or a Microsoft Surface tablet. In the voter model on the $d$-dimensional lattice, individuals at times of a rate one Poisson process
pick a neighbor at random and imitate their opinion. However, in the current interpretation of that model 
it is unlikely that someone who has recently bought a new tablet computer
will replace it, so we introduce latent states $1^*$ and $2^*$ in which individuals will not change their opinion. 
If an individual is in state 1 or 2 we call them active. Letting $f_i$ be the fraction
of neighbors in state $i$ or $i^*$, the dynamics can be formulated as follows
\begin{center}
\begin{tabular}{cc}
$1 \to 2^*$ at rate $f_2$ &\qquad $1^* \to 1$ at rate $\lambda$ \\
$2 \to 1^*$ at rate $f_1$ &\qquad  $2^* \to 2$ at rate $\lambda$
\end{tabular}
\end{center}

In \cite{LSB} the authors showed that if individuals in the population interact equally with all the others then system converges to a limit in which
both technologies have frequency close to 1/2. Here, we will study the system with large $\lambda$, since in this case it is a voter model perturbation
in the sense of Cox, Durrett, and Perkins \cite{CDP}. To explain this, we will construct the system using a graphical representation. 
Suppose first that the system takes place on $\ZZ^d$ and that $d\ge 3$. For each $x \in Z^d$ and nearest neighbor $y$, we have independent Poisson 
processes $T^{x,y}_n$, $n \ge 1$. At each time $t=T^{x,y}_n$ we draw an arrow from $(y,t) \to (x,t)$ to indicate that if the individual at $x$
is active at time $t$ then they will imitate the opinion at $y$. 

To implement the other part of the mechanism, we introduce for each site $x$, a Poisson process $T^x_n$, $n\ge 1$ of ``wake-up dots'' that return the voter to the active 
state. 

\begin{itemize}
  \item 
If there is only one voter arrow between two wake up dots, the result is an ordinary voter event. 
  \item
If between two wake up dots there are voter arrows to $x$ from two different neighbors, an event of probability $O(\lambda^2)$ then the $x$ will change its opinion if and only at least one of the two neighbors has a different opinion. To check this, we note that if the first arrow causes a change then the second one is ignored, while if the first arrow comes from a site with the same opinion as the one at $x$ then there will be a change if and only if the second site has an opinion different from the one at $x$. 
 \item
If $t$ is fixed then at a given site there are $O(\lambda)$ wake-up dots by time $t$. Thus if we want to see the influence of intervals with two voter arrows then we want to run time at rate $\lambda$. The probability of $k$ voter arrows between two wake-up dots is $(1+\lambda)^{-k}$, so in the limit the probability of three of more voter events between two wake-up dots goes to 0 as $\lambda\to\infty$.
\end{itemize}

If we let $\lambda = \ep^{-2}$ and let $n_k(x)$ be the number of neighbors in state $k$ then the rate of flips from $i$ to $j$ in the latent voter model is:
$$
\ep^{-2} c^v_{i,j}(x,\xi) + h_{i,j}(x,\xi) \quad\hbox{where}\quad c^v_{i,j}(x,\xi) = 1_{\{\xi(x)=i\}}\frac{n_j(x)}{2d}
$$ 
If we let $y_1, \ldots y_{2d}$ be an enumeration of the nearest neighbors of $x$, the perturbation is
$$
h_{i,j}(x,\xi) = 1_{\{\xi_t(x)=i\}} \frac{2}{(2d)^2} \sum_{1\le k < \ell \le 2d} 1_{ \{\xi(y_k) = j \hbox{ or } \xi(y_\ell) = j\} } 
$$
If we scale space by $\ep$ then Theorem 1.2 of \cite{CDP} shows that under mild assumptions on the perturbation,
the density of 1's in the rescale particle system converges to 
the solution of the limiting PDE:
\beq
\frac{\partial u}{\partial t} = \frac{1}{2} \Delta u + \phi(u) \quad\hbox{with}\quad 
\phi(u) =\langle h_{2,1}(0,\xi) - h_{1,2}(0,\xi) \rangle_u 
\label{react}
\eeq
and $\langle \cdot \rangle_u$ denotes the expected value with respect to the voter model with density $u$. 

Intuitively, \eqref{react} holds because of a separation of time scales.
The rapid voting means that the configuration near $x$ looks like the voter model equilibrium with density $u(t,x)$.
Later in the paper will show, see \eqref{Zdreact}, that in the case of the latent voter model
$$
\phi(u) = c_du(1-u)(1-2u).
$$
 
If we consider the latent voter model on a torus with $n$ sites and let $\lambda_n\to\infty$ then the system can be analyzed using ideas 
from a recent paper of Cox and Durrett \cite{CD}. Define the density of 1's at time $t$ by
\beq
U^n(t) = \frac{1}{n} \sum_x 1_{\{ \xi_{\lambda t}(x) = 1\} }
\label{dens}
\eeq

\begin{theorem} \label{CDlim}
Suppose $n^{2/d} \ll \lambda_n \ll n$. If $U(0) \to u_0$ then $U(t)$ converges uniformly on compact sets to $U(t)$ the solution of
$$
\frac{du}{dt} = c_du (1-u) (1-2u) \qquad u(0) = u_0
$$
\end{theorem}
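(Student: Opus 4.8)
I would follow the self-averaging scheme for voter-model perturbations of \cite{CDP}, implemented on the torus as in \cite{CD}. Write $U^n$ as a semimartingale via Dynkin's formula: with $\theta_x$ the translation by $x$ on $\ZZ^d_L$ ($L^d=n$),
\beq
U^n(t)=U^n(0)+\int_0^t b_n(\xi_{\lambda_n s})\,ds+M_n(t),\qquad b_n(\xi)=\frac1n\sum_x g(\theta_x\xi),
\eeq
where $M_n$ is a mean-zero martingale and $g$ is the fixed bounded local function of a site and its $2d$ neighbours recording the net flip rate contributed by the perturbation $h$; by \eqref{react}, $\langle g\rangle_u=\phi(u)=c_du(1-u)(1-2u)$. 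The voter part of the generator contributes nothing to the drift: on the vertex-transitive $2d$-regular torus the total rate of $2\to1$ flips equals the total rate of $1\to2$ flips (each is $(2d)^{-1}$ times the number of discordant edges), so in the time scale of \eqref{dens} only the perturbation survives, giving exactly $b_n$, which is bounded. On this time scale each of the $n$ sites flips at rate $O(\lambda_n)$ and each flip moves $U^n$ by $1/n$, so the predictable quadratic variation satisfies $d\langle M_n\rangle_s\le C\lambda_n/n$; since $\lambda_n\ll n$, Doob's $L^2$ inequality gives $\sup_{t\le T}|M_n(t)|\to0$ in probability.

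The crux is to show $\int_0^t\big|b_n(\xi_{\lambda_n s})-\phi(U^n(s))\big|\,ds\to0$ in probability, uniformly on $[0,T]$. Fix $\delta>0$ and split at $s=\delta$; as $g$ and $\phi$ are bounded, the part over $[0,\delta]$ is $\le C\delta$. For $s\in[\delta,T]$ the voter clocks have run for real time $\ge\lambda_n\delta\gg n^{2/d}=L^2$, long enough for local statistics to reach the voter quasi-equilibrium of the current density. Concretely, since $g$ is a polynomial of degree at most $2$ in the neighbour states, $\EE[g(\theta_x\xi_{\lambda_n s})\mid\xi_0]$ is a functional of the coalescing-random-walk dual on $\ZZ^d_L$ started from at most three particles near $x$; run to time $\lambda_n s$, these walks have, with high probability, either coalesced pairwise or diffused out to scale $L$ and become essentially uniformly distributed, so the conditional mean is $o(1)$-close to the value $g$ takes in the voter quasi-stationary measure at the current density, which has drifted only $O(\delta)+O(\sqrt{\lambda_n\delta/n})$ from $U^n(0)$. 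Averaging over $x$ kills the remaining fluctuations: $\mathrm{Var}\big(\tfrac1n\sum_x g(\theta_x\xi_{\lambda_n s})\big)=\tfrac1{n^2}\sum_{x,x'}\mathrm{Cov}(\cdots)=O(1/n)$, because for the voter model the covariance of two such local functions decays in the separation fast enough to be summable in $d\ge3$ (the dual walks from $x$ and $x'$ seldom meet). Hence $b_n(\xi_{\lambda_n s})$ is $L^1$-close to $\phi(U^n(s))$, uniformly on $[\delta,T]$; then $\delta\to0$. This separation of time scales — local equilibration fast, macroscopic density quasi-frozen, with $n^{2/d}\ll\lambda_n$ and $\lambda_n\ll n$ doing precisely these two jobs — is the main obstacle, and the coalescing-walk estimates on the torus are what I would take from \cite{CD}.

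Combining the two steps, $U^n(t)=U^n(0)+\int_0^t\phi(U^n(s))\,ds+e_n(t)$ with $\sup_{t\le T}|e_n(t)|\to0$ in probability. As $\phi$ is Lipschitz on $[0,1]$, say with constant $K$, the ODE $U'=\phi(U)$, $U(0)=u_0$, has a unique solution $U$, and
\beq
|U^n(t)-U(t)|\le|U^n(0)-u_0|+\sup_{r\le T}|e_n(r)|+K\int_0^t|U^n(s)-U(s)|\,ds,
\eeq
so Gronwall's inequality yields $\sup_{t\le T}|U^n(t)-U(t)|\le e^{KT}\big(|U^n(0)-u_0|+\sup_{r\le T}|e_n(r)|\big)$, which tends to $0$ since $U^n(0)\to u_0$. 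No separate tightness argument is needed, and as $T$ is arbitrary this is the asserted convergence, uniform on compact sets.
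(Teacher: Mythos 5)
Your overall frame (Dynkin decomposition with the voter part exactly mean-zero on the regular torus, martingale negligible because $\lambda_n\ll n$, Gronwall at the end) is a genuinely different route from the paper's: for Theorem \ref{CDlim} the paper does not work with the empirical drift at all, but follows \cite{CD} through the dual branching coalescing random walk --- after each branching, coalescence with the parent happens in time $O(1)$ or not before time $O(n)$, and when $\lambda_n\gg n^{2/d}=L^2$ the surviving dual particles equilibrate on the torus before the next branching, so $\EE U^n(t)$ converges to the ODE solution and concentration follows from approximate independence of duals started at well-separated sites (the same scheme carried out for the random graph in Section 2). Your route is closer in spirit to the Darling--Norris drift estimate used in Section 3, and in principle it can work.

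However, the crux step has a genuine gap as written. You condition on $\xi_0$ and assert that $\EE[g(\theta_x\xi_{\lambda_n s})\mid\xi_0]$ is a functional of at most three coalescing random walks run for time $\lambda_n s$, and that the relevant density ``has drifted only $O(\delta)+O(\sqrt{\lambda_n\delta/n})$ from $U^n(0)$.'' Both claims fail for $s\in[\delta,T]$: the process is the perturbed voter model, so its dual over a window of rescaled length $s$ is a \emph{branching} coalescing random walk with a number of branchings of order $e^{Cs}$, not a three-particle coalescing system; and $U^n(s)$ is not close to $U^n(0)$ --- its macroscopic motion is precisely what the theorem describes, so an argument comparing $b_n(\xi_{\lambda_n s})$ to $\phi$ evaluated near $U^n(0)$ cannot close the Gronwall loop. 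The repair is to condition a short time before $s$: work with ${\cal F}_{s-\delta}$ (or a mesoscopic $\epsilon_n\to 0$ with $\lambda_n\epsilon_n\gg L^2$), so that with probability $1-O(\delta)$ the dual of the local function sees no branching in the window, is a pure coalescing-walk system, and mixes because $\lambda_n\delta\gg L^2$; then the conditional mean of $g$ is close to $\phi(U^n(s-\delta))$, and $|U^n(s)-U^n(s-\delta)|=O(\delta)+o(1)$ since the drift is bounded and the martingale is small, after which $\delta\to0$ as you intended. A more minor point: the covariance of two such local functionals on the torus decays like $|x-x'|^{2-d}$, which is not summable for $d=3,4$; the spatial average still works, but the variance bound is $O(n^{2/d-1})$ rather than $O(1/n)$, and it must be computed conditionally on the configuration at the earlier conditioning time for the coalescing-walk estimates to apply.
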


\subsection{Random graphs}
 
We will explain the intuition behind Theorem \ref{CDlim} after we state our new result that replaces the torus by a random graph $G_n$ generated by the {\it configuration model}. In this model vertices have degree $k$ with probability $p_k$. To create that graph we assign i.i.d.~degrees $d_i$ to the vertices and condition the sum $d_1 + \cdots + d_n$ to be even, which is a necessary condition for the values to be the degrees of a graph. We attach $d_i$
half-edges to each vertex and then pair the half-edges at random. We will assume that

\mn
(A0) the graph $G_n$ has no self-loops or parallel edges. 

\mn
If $\sum_k k^2p_k < \infty$ then the probability of ${\cal G}$ is bounded away from 0 as as $n \to \infty$. See Theorem 3.1.2 of \cite{RGD}.
The reader can consult Chapter 3 of that reference for more on the configuration model. 

It seems likely that the results we prove here are true under the assumption that the degree distribution has finite second moment, but the presence of vertices of large degrees causes a number of technical problems. To avoid these we will assume:

\mn
(A1) $p_m=0$ for $m > M$, i.e., the degree distribution is bounded.

\mn
In addition, we want a graph that is connected and has random walks with good mixing times, so we will also suppose: 

\mn
(A2) $p_k = 0$ for $k \le 2$. 

\mn
The relevance of (A2) for mixing times will be explained in Section 2.
Assumptions (A0), (A1) and (A2) will be in force throughout the paper.

On graphs that are not regular there are two versions of the voter model. 

\mn
(i) The {\it site version} in which sites change their opinions at rate 1, and imitate
a neighbor chosen at random, 
$$
c^s_{i,j}(x,\xi) = 1_{\{\xi(x)=i \}} \frac{n_j(x)}{d(x)}
$$
where $n_j(x)$ is the number of neighbors of $x$ in state $j$, and $d(x)$ is the degree of $x$.

\mn
(ii) the {\it edge version} in which each neighbor that is different from $x$ causes his opinion to change at rate 1:
$$
c^e_{i,j}(x,\xi) = 1_{\{\xi(x)=i \}} n_j(x)
$$

The site version is perhaps the ``obvious'' generalization of the voter model on $\ZZ^d$, e.g., it is a special case of the general formulation used in Liggett \cite{Lig85}: $x$ imitates $y$ with probability $p(x,y)$, where $p$ is a transition probability. However, the edge version has two special properties. First, in the words of \cite{SEM} ``magnetization is conserved,'' i.e., the number of 1's is a martingale. Second, if we consider the biased version in which after an edge $(x,y)$ is picked a 1 at $x$ always imitate a 2 at $y$ but a 2 at $x$ imitates a 1 at $y$ with probability $\rho<1$ then the probability a single 2 takes over a system that is otherwise all 1 is the same as the probability a simple random walk that jumps up with probability $1/(1+\rho)$ and down with probability $\rho/(1+\rho)$ never hits 0. This observation is due to Maruyama in 1970 \cite{Mar}, but has recently been rediscovered by \cite{LHM}, who call this version of the voter model ``isothermal''.

From our discussion of the graphical representation for latent voter model on $\ZZ^d$ it should be clear that the latent voter model on $G_n$ is a voter model perturbation. If we let $y_1, \ldots y_{d(x)}$ be an enumeration of the neighbors of $x$, then in the site version
$$
h^s_{i,j}(x,\xi) = 1_{\{\xi_t(x)=i\}} \frac{2}{(d(x))^2} \sum_{1\le k < \ell \le d(x)} 1_{ \{\xi(y_k) = j \hbox{ or } \xi(y_\ell) = j\} } 
$$
while in the edge version
$$
h^e_{i,j}(x,\xi) = 1_{\{\xi_t(x)=i\}}\cdot  2 \sum_{1\le k < \ell \le d(x)} 1_{ \{\xi(y_k) = j \hbox{ or } \xi(y_\ell) = j\} } 
$$

\begin{theorem} \label{ODElimit}
Suppose that $\log n \ll \lambda_n \ll n$. If we define the desnity as in \eqref{dens} and $U^n(0)\rightarrow u_0 $ then $U^n(t)$ converges in probability and uniformly on compact sets to $u(t)$, the solution of
\beq
\frac{du}{dt} = c_p u(1-u)(1-2u) \qquad u(0) = u_0.
\label{limODE}
\eeq
where the value of $c_p$ depends on the degree distribution and the version of the voter model.
\end{theorem}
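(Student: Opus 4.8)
The plan is to follow the martingale-problem strategy used by Cox and Durrett \cite{CD} for the torus, the new ingredient being an analysis of coalescing random walks on the configuration model. On the rescaled time scale, apply Dynkin's formula to the generator of the time-changed latent voter model. For the edge version the number of $1$'s is a martingale, so one gets $U^n(t)=U^n(0)+M^n(t)+\int_0^t D^n(s)\,ds$ with $M^n$ a martingale and $D^n(s)=n^{-1}\sum_x[h^e_{2,1}(x,\xi_{\lambda_n s})-h^e_{1,2}(x,\xi_{\lambda_n s})]$ the spatial average of the perturbation rates. For the site version the number of $1$'s is not conserved, but the degree-weighted count $\tilde U^n(t)=(\sum_x d(x))^{-1}\sum_x d(x)\,1_{\{\xi_{\lambda_n t}(x)=1\}}$ is; one runs the same decomposition for $\tilde U^n$ and then argues that $U^n$ and $\tilde U^n$ have the same limit once the local structure has equilibrated (this needs mild regularity of the initial condition across degree classes).

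I would first dispose of $M^n$: each flip moves $U^n$ by $\pm 1/n$ and flips occur at total rate $O(n\lambda_n)$ on the rescaled scale, so $\langle M^n\rangle_t=O(\lambda_n t/n)\to 0$ exactly because $\lambda_n\ll n$, whence $M^n\to 0$ uniformly on compacts. A soft symmetry argument then fixes the shape of the limiting drift. The observable $g(\xi):=h_{2,1}(0,\xi)-h_{1,2}(0,\xi)$ is antisymmetric under the colour swap $\xi\mapsto 1-\xi$, which interchanges the voter equilibria of densities $u$ and $1-u$; since $g$ depends only on the root and two of its neighbours, its expectation under the voter equilibrium is a polynomial of degree $\le 3$ in $u$ that vanishes at $u=0,1$ and is odd about $1/2$, and the unique such polynomial is $c_p\,u(1-u)(1-2u)$. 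The constant $c_p$ is then obtained from the three-point function of the voter equilibrium on the local weak limit $T$ of $G_n$ (a unimodular Galton--Watson tree): by duality this is an explicit combination of coalescence/return probabilities of a bounded number of random walks on $T$, and it differs for the two versions because the relevant dual walk is the simple random walk (site) versus the ``rate one per edge'' walk, uniform in equilibrium (edge), and these coalesce differently on an irregular tree. One checks $c_p>0$, which is what makes $u\equiv 1/2$ the stable state.

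The core is the separation of time scales: $n^{-1}\sum_x[h_{2,1}(x,\xi_{\lambda_n s})-h_{1,2}(x,\xi_{\lambda_n s})]\to c_pU^n(s)(1-U^n(s))(1-2U^n(s))$, uniformly over compact $s$-sets and in probability. I would cut $[0,T]$ into blocks of rescaled length $h_n\to 0$ with $\lambda_n h_n\gg\log n$ --- possible precisely when $\lambda_n\gg\log n$ --- so that across a block the density moves by only $O(h_n)$, while the elapsed original time $\lambda_n h_n$ exceeds the mixing time $O(\log n)$ of the simple random walk on $G_n$ (here (A1)--(A2), which make $G_n$ an expander, are used). Running the dual coalescing walks, one shows that conditionally on the configuration at the start of a block, the local configuration around a uniformly chosen vertex at the end of the block is close to the voter equilibrium on $T$ at the current density: two lineages started at neighbours of $x$ either coalesce while still in the $O(1)$-radius tree-like neighbourhood of $x$ (a local event governed by $T$, with probability a constant $p_T\in(0,1)$) or they separate, after which they evolve essentially independently and each equilibrates to its stationary law, so the opinions at the two sites they reach become independent with the right density --- here the meeting time $\Omega(n)$ of two walks on $G_n$, which exceeds the whole horizon $\lambda_n T$ because $\lambda_n\ll n$, guarantees no re-meeting. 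Averaging over $x$ using the local weak convergence $G_n\to T$ gives the claim; this is also why the limit is an ODE rather than a PDE --- the ``spatial'' structure equilibrates instantly on this scale.

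Assembling the pieces gives $U^n(t)=U^n(0)+o_p(1)+\int_0^t[\phi(U^n(s))+o_p(1)]\,ds$ uniformly on $[0,T]$ with $\phi(u)=c_p u(1-u)(1-2u)$; since $\phi$ is Lipschitz, a Gr\"onwall estimate and uniqueness for \eqref{limODE} yield $U^n\to u$ uniformly on compacts in probability (with a short initial layer of vanishing length during which the local structure equilibrates, and, for the site version, $U^n$ reconciled with $\tilde U^n$). The main obstacle is the local-equilibrium step: it requires controlling the voter model's genuinely long-range correlations on a random graph while simultaneously (i) proving mixing- and meeting-time bounds for random walks on the configuration model, (ii) handling the fact that the target density $U^n(s)$ is itself random and drifting, and (iii) turning duality statements about a few walks into control of the empirical distribution of local neighbourhoods, all uniformly enough to enter the Gr\"onwall argument. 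The hypotheses enter exactly here and in the martingale bound: $\lambda_n\gg\log n$ gives time for local equilibration inside a window over which the density is effectively frozen, and $\lambda_n\ll n$ keeps both the martingale fluctuations and the re-meeting of separated dual walks negligible.
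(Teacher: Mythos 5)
Your proposal is sound in outline, but it reaches Theorem \ref{ODElimit} by a genuinely different route than the paper. The paper works entirely through the dual: it shows (via the $O(\log n)$ mixing time from the expander property, the locally tree-like structure, and Lemmas \ref{W1}--\ref{W3}) that between branching events the dual particles either coalesce with their parent in time $O(\log n)$ or equilibrate and never meet again before time $o(n)$, so the dual converges to a branching random walk fed i.i.d.\ Bernoulli$(u)$ inputs; this gives convergence of $EU^n(t)$ via the machinery of \cite{CDP}, and concentration of $U^n(t)$ around its mean follows from independence of duals started at well-separated sites plus Chebyshev; the reaction term is then identified by the duality computation \eqref{Zdreact} with the degree-weighted constants. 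You instead propose a forward semimartingale (Dynkin) decomposition, a block-averaging/local-equilibration step on windows of length $h_n$ with $\log n \ll \lambda_n h_n \ll n$, and a Gr\"onwall closure --- which is essentially the Darling--Norris scheme (Theorem \ref{DarNor}) that the paper reserves for the harder Theorem \ref{persist}, applied here to the finite-horizon limit. Both routes rest on the same random-walk inputs (mixing time $O(\log n)$, meeting time $\Omega(n)$, tree-like neighborhoods) and both ultimately need duality, since your local-equilibrium step ``the configuration at the start of a block looks like voter equilibrium at the current density'' is in practice verified by running duals back to time $0$, so the technical core converges to the paper's. What your route buys: it avoids invoking the convergence theorem of \cite{CDP}, it produces quantitative error bounds that feed directly into the persistence argument, and your symmetry observation pinning the drift to the form $c_p u(1-u)(1-2u)$ is a clean shortcut around the explicit cancellation in \eqref{Zdreact}; your remark that the site version conserves only the degree-weighted density, so that $U^n$ and the weighted density must be reconciled and the initial condition must be regular across degree classes, is a point the paper glosses over. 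What the paper's route buys: the $\pm 1/n$-jump martingale bound and the i.i.d.-input structure come for free from the dual, and there is no need to handle the non-conserved site-version drift of order $\lambda_n$, which in your scheme forces the detour through $\tilde U^n$. One caution: your statement that ``the number of $1$'s is a martingale'' in the edge version is exact only for the pure voter dynamics; for the latent model the latent sites break this, and the reduction of the four-state chain to ``$\lambda_n\times$voter $+$ perturbation $+$ $o(1)$'' (two voter arrows between wake-up dots, negligible probability of three or more) must be justified before your decomposition with drift $D^n$ is legitimate --- the paper does this informally through the graphical representation, and your write-up should make the same step explicit.
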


\subsection{Duality}

To explain why Theorems \ref{CDlim} and \ref{ODElimit} are true, we will introduce a dual process that is the key to the analysis. The dual process was first introduced more than 20 years ago by Durrett and Neuhauser \cite{DurNeu}, and is the key to work of Cox, Durrett, and Perkins \cite{CDP}. To do this, we construct the process using a graphical representation that generalizes the one introduced for $\ZZ^d$. For each $x \in Z^d$ and neighbor $y$, we have independent Poisson 
processes $T^{x,y}_n$, $n \ge 1$. At each time $t=T^{x,y}_n$ we draw an arrow from $(y,t) \to (x,t)$ to indicate that if the individual at $x$
is active at time $t$ then they will imitate the opinion at $y$. In the edge case all these processes have rate 1. In the site case $T^{x,y}_n$, $n \ge 0$ has
rate $1/d(x)$. To implement the other part of the mechanism, we have for each site $x$, a rate $\lambda$ Poisson process $T^x_n$, $n\ge 1$ of ``wake-up dots'' that return the voter to the active state. 

To compute the state of $x$ at time $t$ we start with a particle at $x$ at time $t$. To be precise $\zeta^{x,t}_0 = \{x\}$. As we work backwards in time the particle does not move until the first time $s$ there is an arrow $(y,t-s) \to (x,t-s)$. 

\begin{itemize}
  \item 
If this is the only voter arrow between the two adjacent wake-up dots then the particle jumps to $y$. In the edge case the random walk jumps at equal rate to all neighbors so its stationary distribution $\pi$ is uniform. In the site case, the random walk jumps to each neighbor of $x$ with probability $1/d(x)$ so the stationary distribution i $\pi(x) = d(x)/D$ where $D = \sum_y \pi(y)$.
\item
If in the interval between the two adjacent wake-up dots there are arrows from $k$ distinct $y_i$ then the state changes to $\{ x, y_1, \ldots y_k \}$ since we need to know the current state of all these points to know what change should occur in the process. In the limit as $\lambda \to\infty$ we will only see branchings that add two $y_i$. We include the case $k>2$ to have the dual process well-defined.  
\item
We do not need to know the order of the arrows because $x$ will change if at least one of the $y_i$ has a different opinion.  When $\lambda$ is small some of the $y_i$ might change their state during the interval between the two wake-up dots but this possibility has probability zero in the limit.
\item
To complete the definition of the dual, we declare that if a branching event adds a point already in the set, or if a particle jumps onto an occupied site  then the two coalesce to one. 
\end{itemize}

\noindent
The dual process can be used to compute the state of $x$ at time $t$. The first step is to work backwards in time to find $\zeta^{x,t}_t$ the set of sites at time 0 that can influence the state of $x$ at time $t$. We note the states of the sites at time $t$ and then work up the graphical representation to determine what changes should occur at the branching points in the dual.

To prove Theorem \ref{CDlim}, Cox and Durrett \cite{CD} show that after a branching event any coalescence between the particle that branched and the two newly created particles will happen quickly, in time $O(1)$ or these particles will need time $O(n)$ to coalesce. Let $L = n^{1/d}$ be the side length of the torus. When $\lambda_n \gg n^{2/d}$ the particles will come to equilibrium on the torus before the next branching occurs in the dual, so we can forget about the relative location of the particles and we end up with an ODE limit. On the random graph, our assumption that all vertices have degree $\ge 3$ implies that the mixing time for random walks on these graphs is $O(\log n)$. Thus when $\lambda_n \gg \log n$, we have the situation that after a branching event there may be some coalescence in the dual at times $O(1)$ but then the existing particles will come to equilibrium on the graph before the next branching occurs in the dual. In both cases $\lambda_n \ll n$ is needed for the the perturbation to have a nontrivial effect. 

\begin{remark} There is no reason for having vertices of degree 0 in our graph. If $p_2>0$ and we look at the dynamics on the giant component then Theorem \ref{ODElimit} will hold if $\log^2 n \ll \lambda_n \ll$. The increase in the lower bound is needed to compensate for the fact that the mixing time for random walks on the graph is $O(\log^2 n)$. See e.g., Section 6.7 in \cite{RGD}.
\end{remark}

\subsection{Long time survival}
  
The latent voter model has two absorbing states $\equiv 1$ and $\equiv 2$. On a finite graph it is a finite state Markov chain so we know it will eventually reach one of them. However by analogy with the contact process on the torus \cite{MZd} and on power-law random graphs, \cite{MMVY}, 
this result should hold for times up to $\exp(\gamma n)$ for some $\gamma>0$. 
Unfortunately we are only able to prove survival for any power of $n$.

\begin{theorem} \label{persist}
Suppose that $\log n \ll \lambda_n \ll n$ and $t_n\to\infty$. Let $\ep > 0$ and $k<\infty$. 
There is a $T_0$ that depends on the initial density so that if $n$ is large then
with high probability
$$
|U^n(t) - 1/2| \le 5\ep \qquad\hbox{for all $t \in [T_0,n^k]$.}
$$
\end{theorem}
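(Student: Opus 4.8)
\noindent\emph{Plan of proof.}
The statement splits into reaching a neighborhood of $1/2$ and then staying there for a very long time. The first part is immediate from Theorem~\ref{ODElimit}: since $u_0\in(0,1)$ and the limiting ODE $du/dt=c_p u(1-u)(1-2u)$ has $1/2$ as a globally attracting fixed point on $(0,1)$, there is a $T_0=T_0(u_0)$ with $u(T_0)\in(1/2-\ep,1/2+\ep)$, hence $\PP(|U^n(T_0)-1/2|<2\ep)\to 1$. The proof of Theorem~\ref{ODElimit} moreover shows that at such a time the configuration feeding the dual is, with high probability, \emph{locally equilibrated}: near any site it resembles the voter-model stationary measure at the current density. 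So it suffices to prove that, started from $|U^n-1/2|<2\ep$ with a locally equilibrated configuration, with high probability $|U^n(t)-1/2|\le 5\ep$ for all $t\in[0,n^k]$.

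Write the Doob decomposition $U^n(t)=U^n(0)+\int_0^t b_n(\xi_{\lambda_n s})\,ds+M^n(t)$ in the sped-up time scale, where $M^n$ is a martingale with jumps of size $1/n$. Two features are used. First, a \emph{restoring drift}: the separation-of-time-scales analysis behind Theorem~\ref{ODElimit} (cf.\ \cite{CDP}) shows that when the configuration is locally equilibrated, $b_n(\xi)\approx\phi(U^n)=c_p U^n(1-U^n)(1-2U^n)$, which is $\le -c\ep$ on $[1/2+\ep,1/2+2\ep]$ and $\ge c\ep$ on $[1/2-2\ep,1/2-\ep]$, so outside a small window about $1/2$ the drift pushes $U^n$ back toward $1/2$ at a rate bounded away from $0$. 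Second, \emph{vanishing noise}: the predictable quadratic variation of $M^n$ grows at rate at most $\lambda_n|E(G_n)|/n^2=O(\lambda_n/n)$, since each of the $O(n)$ discordant edges flips an endpoint at rate $O(\lambda_n)$ and moves $U^n$ by $1/n$. As $\lambda_n\ll n$, over a window of any fixed macroscopic length $L$ we have $\langle M^n\rangle\le CL\lambda_n/n=o(1)$, and Freedman's inequality for martingales with jumps $\le 1/n$ gives $\PP(\sup_{s\le L}|M^n(s)|\ge\ep)\le 2\exp(-c_\ep n/(L\lambda_n))$. Because $\lambda_n\gg\log n$, the walks run backwards by the dual mix in $o(1)$ units of macroscopic time (Section~2), so the local-equilibrium property is restored a negligible macroscopic time after any disturbance; this is what lets the two estimates be reused on successive windows.

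The heart of the argument is a restart scheme in the spirit of \cite{CD}. Fix $L$ large and cut $[0,n^k]$ into $n^k/L$ windows of length $L$. Combining the restoring drift with the Bernstein bound, one shows that on each window, conditionally on entering it with $|U^n-1/2|<2\ep$ and a locally equilibrated configuration, with probability $\ge 1-p_n$ both (i) $U^n$ stays in $[1/2-3\ep,1/2+3\ep]$ throughout the window, and (ii) $U^n$ exits the window with $|U^n-1/2|<2\ep$ and a locally equilibrated configuration --- the restoring drift alone moves $U^n$ to within $\ep$ of $1/2$ over the window, so only a martingale fluctuation exceeding $\ep$ can spoil (i)--(ii). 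Here $p_n=2\exp(-c_\ep n/(L\lambda_n))$ plus an (also exponentially small) term for the failure of re-equilibration. A union bound over the $n^k/L$ windows then shows that with probability $1-(n^k/L)p_n$ no window fails, so $U^n$ never leaves $[1/2-3\ep,1/2+3\ep]\subset(1/2-5\ep,1/2+5\ep)$ on $[0,n^k]$.

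The delicate step is making the per-window failure probability small enough that the union bound over the polynomially many windows still yields $1-o(1)$; this is why one must work with the quantitative martingale estimate above and not merely cite the hydrodynamic limit of Theorem~\ref{ODElimit}. It requires verifying, at \emph{every} window simultaneously, that $b_n$ is genuinely well approximated by $\phi(U^n)$, i.e.\ that local equilibrium is maintained after each restart; in particular one must control the rare event that the dual does a long excursion before re-equilibrating --- the source of the second term in $p_n$ --- and this is where assumption (A2), $\lambda_n\gg\log n$ (mixing), and $\lambda_n\ll n$ (size of the noise) all enter. Propagating the local-equilibrium hypothesis through polynomially many restarts is the main obstacle.
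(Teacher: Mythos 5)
Your overall architecture --- reach a neighborhood of $1/2$ via the ODE, then combine a restoring drift with a small-noise martingale estimate and iterate a restart scheme over polynomially many windows --- is exactly the skeleton of the paper's argument (the paper runs the iteration with stopping times $T_k$ and correction phases of fixed length $t_0$ rather than fixed windows, a cosmetic difference, and packages the per-window estimate as Theorem \ref{DarNor} of Darling--Norris). But there is a genuine gap at the step you yourself flag as ``the main obstacle'': you assume, rather than prove, that at every restart the microscopic drift satisfies $b_n(\xi)\approx c_p U^n(1-U^n)(1-2U^n)$ up to an error $\ep$, with a failure probability small enough to survive a union bound over $n^k$ windows. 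This is precisely the content the paper has to build from scratch in Sections 3.1--3.3: moment bounds for cluster sizes of the coalescing dual run backwards for time $C_1\log n$ (Lemmas \ref{meanc}--\ref{mkc}, giving $N_{max}\le n^\delta$ except with probability $n^{-m}$, Lemma \ref{Nmax}), a $2m$-th moment concentration of $\tilde X_t$ around $\tilde u$ conditionally on ${\cal F}_{t-(C_1\log n)/\lambda}$ (Lemma \ref{mainL}), a dependency-graph estimate for the sum of the localescence indicators $\hat{\mathds{1}}(x|y|z)$ (Lemma \ref{EY2m}) showing the effective reaction coefficient concentrates, and finally a subdivision of the time axis into pieces of length $1/(\lambda n^{1/2})$ to convert fixed-time bounds into control of $\int_0^t|\beta(\xi_s)-b(X_s)|\,ds$, i.e.\ of $P(\Omega_1^c)$ in \eqref{om1bd}. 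None of this machinery appears in your proposal, and it cannot be replaced by citing the hydrodynamic limit of Theorem \ref{ODElimit}, which only gives qualitative ``with high probability'' statements on fixed time intervals; the conditional, quantitative form is what allows reuse at the (random, polynomially many) restart times.

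A secondary but related problem is your claim that the ``failure of re-equilibration'' is exponentially small. The method actually available (and the one the paper uses) is a moment method, which yields errors $\le C_m n^{-m}$ for every fixed $m$ but nothing exponential; this is exactly why Theorem \ref{persist} asserts persistence only up to $n^k$ rather than $e^{\gamma n}$, as the paper notes. Asserting exponential bounds both overstates what can be proved and obscures why the theorem has the form it does. Finally, be careful with the noise term itself: your per-window Freedman bound $2\exp(-c_\ep n/(L\lambda_n))$ (the analogue of the term $2de^{-\delta^2/(2At)}$ with $A=2\lambda/n$ in Theorem \ref{DarNor}) tends to $0$ under $\lambda_n\ll n$, but it is not automatically $o(n^{-k})$ when $\lambda_n$ is close to $n$, so the assertion that $(n^k/L)p_n\to 0$ needs a quantitative justification rather than just the hypothesis $\log n\ll\lambda_n\ll n$.
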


\begin{remark} Here and in what follows `` with high probability'' means with probability $\to 1$ as $n\to\infty$.
\end{remark}

\noindent
To prove this, we use Theorem 4.2 of Darling and Norris \cite{DN}. To state their result we need to introduce some notation. Let $\xi_t$ be a continuous time Markov chain with countable state space $S$ and jump rates $q(\xi,\xi')$. In our case $\xi_t$ will be the state of the voter model on the random graph. For their coordinate function $x: S \to \RR^d$ we will take $d=1$ and 
$$
x(\xi) = \frac{1}{n} \sum_{x\in G_n} 1_{\{ \xi_{\lambda_n t}(x)=1\} }.
$$
We are interested in proving an ODE limit for $X_t =x(\xi_t)$. To compare with the paper note that our $\xi_t$ is their $X_t$ and our $X_t$ is their 
${\bf X}_t$. 

For each $\xi\in S$ we define the drift vector
$$
\beta(\xi) = \sum_{\xi'\neq\xi} (x(\xi')-x(\xi)) q(\xi,\xi')
$$
We let $b$ be the drift of the proposed deterministic limit limit $x_t$:
$$
x_t = x_0 + \int_0^t b(x_s) \,ds.
$$
In our case $b(x) =cx(1-x)(1-2x)$. To measure the size of the jumps we let
$\sigma_\theta(x) = e^{\theta|x|} - 1 - \theta|x|$ and let
$$
\phi(\xi,\theta) = \sum_{\xi'\neq \xi} \sigma_\theta( x(\xi')-x(\xi) ).
$$

Consider the events $\Omega_0 = \{ |X_0 - x_0| \le \eta \}$,
\begin{align*}
&\Omega_1  = \left\{ \int_0^{t} |\beta(\xi_s) - b(X_s)| \, ds \le \eta \right\}, \\
&\hbox{and }\Omega_2   = \left\{ \int_0^t \phi(\xi_s, \theta) \, ds \le \theta^2At/2 \right\}.
\end{align*}

\begin{theorem} \label{DarNor}
Under the conditions above, for each fixed $t$
$$
\mathbb{P} \left( \sup_{s \le t} |X_s - x_s| > \ep \right) \le 2de^{-\delta^2/(2At)} + P( \Omega_0^c \cup  \Omega_1^c \cup  \Omega_2^c )
$$ 
\end{theorem}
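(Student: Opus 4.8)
The plan is to prove this exactly in the spirit of Darling and Norris: realize $X_t$ as a martingale plus a drift, absorb the drift using Lipschitz continuity of $b$ and Gronwall, and control the martingale part by a Bernstein-type exponential inequality built out of the jump functional $\phi$.

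First I would apply Dynkin's formula: since $x$ is a bounded coordinate function of the chain,
$$
M_t \;=\; X_t - X_0 - \int_0^t \beta(\xi_s)\, ds
$$
is a martingale (strictly, a local martingale, to be localized by the stopping times mentioned below). On $\Omega_0 \cap \Omega_1$ one then has, for every $s\le t$,
\begin{align*}
|X_s - x_s| &\le |M_s| + |X_0 - x_0| + \int_0^s |\beta(\xi_r) - b(X_r)|\, dr + \int_0^s |b(X_r) - b(x_r)|\, dr \\
&\le \sup_{r\le t}|M_r| + 2\eta + L\int_0^s |X_r - x_r|\, dr,
\end{align*}
where $L$ is the Lipschitz constant of $b$ on the compact range of $x$. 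Gronwall's inequality gives $\sup_{s\le t}|X_s - x_s| \le (\sup_{s\le t}|M_s| + 2\eta)e^{Lt}$, so the event $\{\sup_{s\le t}|X_s - x_s| > \ep\}$ is contained, off $\Omega_0^c \cup \Omega_1^c$, in $\{\sup_{s\le t}|M_s| > \delta\}$ with $\delta = \ep e^{-Lt} - 2\eta$; this is the $\delta$ appearing in the statement.

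The heart of the argument is the exponential bound on $\sup_{s\le t}|M_s|$ on the event $\Omega_2$. Fix $\theta > 0$ and apply the It\^o/Dynkin formula to $e^{\theta M_t}$: the continuous drift $-\theta\beta(\xi_s)$ exactly cancels the first-order part of the compensator of the jumps of $e^{\theta M}$ (because $\sum_{\xi'} q(\xi,\xi')(x(\xi')-x(\xi)) = \beta(\xi)$), leaving
$$
d\bigl(e^{\theta M_t}\bigr) = e^{\theta M_{t-}}\left[\,\sum_{\xi'\neq\xi_{t-}} q(\xi_{t-},\xi')\Bigl(e^{\theta(x(\xi')-x(\xi_{t-}))} - 1 - \theta\bigl(x(\xi')-x(\xi_{t-})\bigr)\Bigr)\right] dt + dN_t,
$$
with $N$ a local martingale. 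Since $e^{\theta v} - 1 - \theta v \le e^{\theta|v|} - 1 - \theta|v|$ for $\theta>0$ and all $v\in\RR$, the bracketed quantity is at most $\phi(\xi_{t-},\theta)$, so
$$
Z_t^\theta \;=\; \exp\left(\theta M_t - \int_0^t \phi(\xi_s,\theta)\, ds\right)
$$
is a nonnegative supermartingale with $Z_0^\theta = 1$. On $\Omega_2$ we have $\int_0^s \phi(\xi_r,\theta)\,dr \le \theta^2 A t/2$ for every $s\le t$, so $e^{\theta M_s} \le Z_s^\theta e^{\theta^2 At/2}$ there, and the maximal inequality for the nonnegative supermartingale $Z^\theta$ gives
$$
\PP\left(\sup_{s\le t} M_s > \delta,\ \Omega_2\right) \le e^{-\theta\delta + \theta^2 At/2};
$$
the choice $\theta = \delta/(At)$ turns the right side into $e^{-\delta^2/(2At)}$. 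Running the same computation with $e^{-\theta M_s}$ in place of $e^{\theta M_s}$ bounds $\sup_{s\le t}(-M_s)$, and in the general $d$-dimensional statement one repeats for each of the $d$ coordinates; this produces the factor $2d$. Collecting terms, $\PP(\sup_{s\le t}|X_s - x_s| > \ep) \le \PP(\sup_{s\le t}|M_s| > \delta,\ \Omega_2) + \PP(\Omega_0^c\cup\Omega_1^c\cup\Omega_2^c) \le 2d\,e^{-\delta^2/(2At)} + \PP(\Omega_0^c\cup\Omega_1^c\cup\Omega_2^c)$.

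The step I expect to be the main obstacle is making the supermartingale manipulation rigorous for a general countable-state chain: $M$ is only a local martingale, and $\beta(\xi)$ and $\phi(\xi,\theta)$ need not be integrable, so one must first run the exponential estimate for the processes stopped at exit times of large compacts for $X$ together with caps on the accumulated jump rate, where every integral is bounded and $Z^{\theta}$ is a genuine supermartingale, and only then remove the localization, using that on $\Omega_0\cap\Omega_1\cap\Omega_2$ the integrals $\int_0^t|\beta(\xi_s)-b(X_s)|\,ds$ and $\int_0^t\phi(\xi_s,\theta)\,ds$ are finite by hypothesis. The remaining ingredients — Dynkin's formula, the elementary inequality $e^{\theta v}-1-\theta v\le e^{\theta|v|}-1-\theta|v|$ (equivalently $\sinh w \ge w$), Gronwall, and Doob's maximal inequality — are routine.
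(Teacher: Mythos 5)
Your argument is correct, and it is essentially the original Darling--Norris proof of their Theorem 4.2: the paper itself gives no proof of this statement, it simply imports it from \cite{DN}, so there is nothing in the paper to compare against except the citation. Your decomposition $X_t = X_0 + \int_0^t\beta(\xi_s)\,ds + M_t$, the Gronwall step on $\Omega_0\cap\Omega_1$, and the exponential supermartingale $Z^\theta_t=\exp\bigl(\theta M_t-\int_0^t\phi(\xi_s,\theta)\,ds\bigr)$ with the optimization $\theta=\delta/(At)$ are exactly the ingredients of the cited result. Two points in your write-up are actually clarifications of the statement as quoted here: the parameter $\delta$ is never defined in the paper's version of the theorem, and your identification $\delta=\varepsilon e^{-Lt}-2\eta$ (with $L$ the Lipschitz constant of $b$) is the correct relation from \cite{DN}; likewise the paper's displayed definition of $\phi(\xi,\theta)$ omits the factor $q(\xi,\xi')$, and your computation, which includes the rates, uses the correct definition. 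Your localization caveat (stopping at exits from compacts and at caps on accumulated jump activity before removing the localization) is the right way to handle the local-martingale issue; in the application in this paper the state space is finite and the jump rates bounded, so that step is automatic.
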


\noindent
To check the conditions we note

\begin{itemize}
  \item 
We have jumps that change the density by $1/n$ at times of a Poisson processes at total rate $\le M \lambda n$, so if we let $A = 2\lambda/n$ 
then $P(\Omega_2^c) \le \exp( - c \lambda n )$. 
  \item
$P(\Omega^c_0)\le \exp(-cn)$ since we will take $\xi_0$ to be product measure and $x_0$ to be its density. 
\item
The hard work comes in estimating $P(\Omega_1^c)$, i.e., estimating the difference in the drift in the particle system from what we compute on the
basis of the current density. We do this by computing the expected value of high moments of the difference $|\beta(\xi_s) - b(X_s)|$
so we end up with estimates that 
for a fixed time are $\le n^{-m}$. By subdividing the interval into small pieces we can use the single time estimates to control the 
supremum and hence the integral but only over a bounded time interval. However this is enough since it allows us to show that when the density wanders
more than $4\ep$ away from 1/2, we can return it to within $2\ep$ with probability $n^{-m}$, and in addition never have the difference exceed $5\ep$. 
\end{itemize}

Theorem \ref{ODElimit} is proved in Section 2 and Theorem \ref{persist} in Section 3. These results hold for other voter model perturbations
such as the evolutionary games considered in \cite{CD}. However, the main obstacle to proving a general result is to find a formulation that works
well on graphs with variable degrees.
The arguments in the first proof closely parallel arguments in \cite{CD} but now use estimates for random walks on random graphs.
The keys to the second proof are results concerning the behavior of coalescing random walks (CRWs). There have been
a number of studies of the time it takes for CRWs starting from every site of a random graph to coalesce to 1. See results by Cooper et al \cite{CooperReg,Cooper} and Oliveira \cite{OliRRW,OliMF}. Here we need results about the decay of density of particles at short times. Since we are content with upper bounds the work is not hard (see Section 3.1). However, it seems difficult to prove generalizations of the results of  
Sawyer \cite{Sawyer} and Bramson and Griffeath \cite{BGcrw}  because the results on $\ZZ^d$ rely heavily on translation invariance. 

\clearp

\section{Proof of Theorem \ref{ODElimit}}

\subsection{Mixing times for random walks} 

Bounds for the mixing times come from studying the conductance
$$
Q(x,y) = \pi(x) q(x,y)
$$
where $\pi$ is the stationary distribution and $q(x,y)$ is the rate of jumping from $x$ to $y$. In the site version $q(x,y) = 1/d(x)$ while
$\pi(x) = d(x)/D$ when $y$ is a neighbor of $x$, $y\sim x$, so $Q(x,y) = 1/D$ when $y \sim x$. In the edge version, $q(x,y)=1$ if $y \sim x$, while
$\pi(x) = 1/n$ where $n$ is the number of vertices, so $Q(x,y) = 1/n$ when $y\sim x$. When the mean degree $\sum_k kp_k < \infty$, 
the two conductances are the same up to a constant. 

Define the isoperimetric constant by
$$
h = \min_{\pi(S)\le 1/2} \frac{Q(S,S^c)}{\pi(S)}  
$$
where $\pi(S) = \sum_{x\in S} \pi(x)$ and $Q(S,S^c) = \sum_{x\in S,y\in S^c} Q(x,y)$. Cheeger's inequality, see e.g. Theorem 6.2.1. in \cite{RGD}
implies that the spectral gap $\beta = 1- \lambda_1$ has
\beq
\frac{h^2}{2} \le \beta \le 2h
\label{Cheeger}
\eeq
Using Theorem 6.1.2 in \cite{RGD} we see that
\beq
\Delta(t) \equiv \max_{x,y} \left| \frac{p_t(x,y)}{\pi(y)} - 1 \right| \le \frac{e^{-\beta t} }{\pi_{min}}
\label{convbd}
\eeq
where $\pi_{min} = \min \pi(x)$.

Gkantsis, Mihail, and Saberi \cite{GMS} have shown, see Theorem 6.3.2. in \cite{RGD}:

\begin{theorem} \label{GMSth}
Consider a random graph in which the minimum degree is $\ge 3$. There is a constant $\alpha_0$ so that $h\ge \alpha_0$.
\end{theorem}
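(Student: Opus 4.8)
The plan is a first--moment argument over vertex subsets, estimating the edge boundary through the random pairing of half--edges. First I would condition on (A0); since the degree distribution has bounded support, this event has probability bounded away from $0$, so it inflates every probability by at most a constant factor and it suffices to work with the configuration \emph{multigraph}. By Cheeger's inequality \eqref{Cheeger} it is enough to bound $h$ from below, and since the mean degree is finite the site and edge conductances agree up to a constant; so the goal reduces to producing a constant $\alpha_0>0$ for which, with high probability, every $S$ with $1\le |S|\le n/2$ has $e(S,S^c)\ge \alpha_0\, d(S)$, where $d(S)=\sum_{x\in S}d(x)$ and $e(S,S^c)$ is the number of $S$--$S^c$ edges.

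For a fixed set $S$ with $d(S)=\Delta$, I would condition on the degree sequence, set $N=\sum_x d(x)=2m$, and reveal the uniform random pairing. Assuming $\Delta\le N/2$ (which, up to the constant--factor comparison above, is what $|S|\le n/2$ gives), the number of cross edges $B:=e(S,S^c)$ has an explicit, hypergeometric--type law, and a Stirling computation yields
\[
\PP\big(B<\alpha_0\Delta\big)\ \le\ \exp\!\big(-(1-\delta(\alpha_0))\, m\, H(\Delta/N)\big),\qquad \delta(\alpha_0)\to0\ \text{as}\ \alpha_0\to0 ,
\]
where $H(p)=-p\ln p-(1-p)\ln(1-p)$; the anchor is $\PP(B=0)=(\Delta-1)!!\,(N-\Delta-1)!!/(N-1)!!\approx e^{-mH(\Delta/N)}$. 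Bounding $\#\{S:|S|=k\}=\binom nk\le e^{nH(k/n)}$ and using the hypothesis $d(S)\ge 3k$ (minimum degree $\ge 3$), so that $\Delta/N\ge (3/\bar d_n)(k/n)$ with $\bar d_n=N/n\in[3,M]$, the whole estimate rests on the elementary inequality
\[
H(x)\ \le\ (1-\epsilon_0)\,\tfrac{\bar d_n}{2}\,H\!\big(\tfrac{3x}{\bar d_n}\big)\qquad\text{for all }x\in(0,1],
\]
valid for some constant $\epsilon_0>0$ depending only on the degree distribution: as $x\to0$ the right side behaves like $\tfrac32(-x\ln x)$ and the left like $-x\ln x$, which is exactly where the ``$3$'' is spent. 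Fixing $\alpha_0$ so small that $\delta(\alpha_0)<\epsilon_0$, for $k$ a fixed fraction of $n$ the product $\binom nk\,\PP(B<\alpha_0\Delta)$ is at most $e^{-cn}$; for smaller $k$ I would instead use $mH(\Delta/N)\gtrsim\tfrac32 k\ln(n/k)$, which gives $\binom nk\,\PP(B<\alpha_0\Delta)\le\big(e\,(k/n)^{1/2-o(1)}\big)^{k}$, summable once $k/n$ is below a constant. Adding the two regimes over $1\le k\le n/2$ makes $\PP(\exists\ \text{bad}\ S)=o(1)$, hence $h\ge\alpha_0$ with high probability.

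The hard part, I expect, is to make the above work \emph{simultaneously} at all scales with a single $\alpha_0$. Two regimes of bad sets must be excluded: small sets, which can have a small boundary only by inducing an anomalously dense (near $3$--regular) subgraph, killed by a first--moment count; and linear--size sets, where the exponential rate $mH(\Delta/N)$ must beat the entropy $nH(k/n)$ of the number of candidate sets. Both are governed by the one entropy inequality above, and that inequality degenerates to an equality precisely at minimum degree $2$ --- matching the fact that $2$--regular graphs are unions of cycles and have conductance tending to $0$, which is why (A2) is indispensable. Carrying out the Stirling estimates uniformly in $\Delta/N$ near its endpoints, and tracking the $o(1)$ terms carefully enough that one constant $\alpha_0$ suffices throughout, is the part that needs real care rather than routine computation.
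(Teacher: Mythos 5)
The paper itself does not prove this theorem: it is quoted from Gkantsios, Mihail, and Saberi \cite{GMS} (see Theorem 6.3.2 of \cite{RGD}), and your plan is essentially the same first-moment argument used there — a union bound over vertex sets against the exact pairing probabilities of the configuration model, with the minimum-degree-$3$ hypothesis spent precisely in the entropy comparison $H(x)\le(1-\epsilon_0)\tfrac{\bar d}{2}H(3x/\bar d)$, which (as you note) degenerates exactly at minimum degree $2$. The sketch is sound modulo the uniform Stirling bookkeeping you flag, plus the minor point that $|S|\le n/2$ gives $d(S)\le N/2$ only after passing to complements, which your site/edge conductance comparison handles.
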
 

\noindent
Combining the last result with \eqref{Cheeger}, \eqref{convbd}, and the fact that $\pi_{min} \ge 1/(C_0n)$ for large $n$, we see that 
$$
\Delta(t) \le C_0 n e^{-\gamma t} \qquad \hbox{where $\gamma = \alpha_0^2/2$.}
$$
If we let $C_1 = (6/\alpha_0^2)$ then $n$ large we have for $t \ge C_1 \log n$
\beq
\Delta(t) \le 1/n
\label{convbd}
\eeq

\subsection{Our random graph is (almost) locally a tree}

Recall that to construct our random graph we let $d_1, d_2, \ldots d_n$ be i.i.d.~from the degree distribution conditioned on $d_1+ \cdot + d_n$ to be even
and then we pair the half-edges at random. Given a vertex $x$
with degree $d(x)$, we let $y_1(x) \ldots y_{d(x)}(x)$ be its neighbors. To grow the graph we let $V_0=\{x\}$. On the first step we
draw edges from $x$ to  $y_1(x) \ldots y_{d(x)}(x)$ and let $V_1 = \{ y_1(x), \ldots, y_{d(x)}(x) \}$ which we consider to be an ordered list.
If $V_t$ has been constructed we let $x_t$ be the first element of $V_t$ and draw edges from $x_t$ to  $y_1(x_t) \ldots y_{d(x_t)}(x_t)$.
We then add the members of  $y_1(x_t) \ldots y_{d(x_t)}(x_t)$ not already in $V_t$ to it to create $V_{t+1}$. 

We stop when we have determined the neighbors of all vertices at distance $<(1/5) \log_M n$ from $x$. A simple calculation using branching processes shows that the total number of neighbors within that distance of $x$ is $\le n^{1/5} \log n$ for large n. The $\log n$ takes care of the limiting random variable. Thus in the construction we will generate $\le M n^{1/5} \log n$ connections. We say that a collision occurs at time $t$ if we connect to a vertex already in $V_t$. The probability of a collision on  single connection is $\le M n^{-4/5}\log n$. The expected number of collisions stating from any site is $\le CM n^{-3/5}\log^2 n$, so for most starting points (but not all) the graph will be a tree. To get a conclusion that applies to all starting points we note that the probability of two collisions in the construction starting from one site is 
$$
\le \binom{ CMn^{1/5}\log n }{2} (n^{-4/5}\log^2 n)^2 =O(n^{-6/5}\log^6 n)
$$ 

As we build up the graph we first find all of the neighbors of vertices at distance 1 from $x$ then distance 2, etc. Thus when a collision occurs it will connect a vertex at distance $k$ with one at distance $k$ or to one at distance $k+1$ that already has a neighbor at distance $k$. As we will explain after the next lemma, this makes very little difference.

\subsection{Results for hitting times}

\begin{lemma} \label{W1}
Once two particles are a distance $r_n=2\log_{2}\log n$ then, for large $n$, with probability $\ge 1 - 2/(\log n)^2$, 
they will reach a distance $5 r_n$ before hitting each other. 
\end{lemma}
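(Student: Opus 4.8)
\mn
The plan is to reduce the statement to the tree-like neighbourhood of the two particles and then run a gambler's-ruin estimate on the graph distance between them. First, note that the event involves the particles only through the embedded jump chain of the pair, so the Poisson waiting times and the distinction between the site and edge versions play no role: in both versions a particle, when it moves, moves to a uniformly chosen neighbour. Next, the biased walk introduced below reaches level $5r_n$ (starting from $r_n$) within $Cr_n$ steps except with probability $o((\log n)^{-2})$, for a suitable constant $C$; on that event each particle stays within graph distance $(C+1)r_n = O(\log_2\log n) \ll (1/5)\log_M n$ of its starting position, a region that by the previous subsection is, with the required high probability, a tree (the at most one collision possible in so small a region makes a negligible difference, as explained there). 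So it suffices to prove the bound when that neighbourhood is a genuine tree with all degrees in $[3,M]$.

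On such a tree there is a unique path joining the two particles, and a particle at a vertex $x$ has exactly one neighbour on that path and $d(x)-1 \ge 2$ neighbours off it. Writing $D_k$ for the distance between the particles after $k$ moves of the pair, with $D_0 = r_n$, and stopping when $D$ hits $0$ or $5r_n$, it follows that, conditionally on the past and on which particle moves, $D$ goes up by $1$ with probability $(d(x)-1)/d(x) \ge 2/3$ and down by $1$ with probability $1/d(x) \le 1/3$ (when $D_k = 1$ a downward step means coalescence, but still with probability $\le 1/3$). Hence I can couple $D$ with a biased random walk $S_k$ on $\ZZ$, with $S_0 = r_n$ and up-probability $p = 2/3$, down-probability $q = 1/3$, so that $D_k \ge S_k$ for all $k$ up to the first time $D$ reaches $5r_n$; in particular $D$ reaches $5r_n$ before hitting $0$ whenever $S$ does.

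By the gambler's-ruin formula for $S$, with $\rho = q/p = 1/2$,
\[
\PP\bigl(S \text{ reaches } 5r_n \text{ before } 0 \mid S_0 = r_n\bigr) = \frac{1-\rho^{r_n}}{1-\rho^{5r_n}} \ge 1 - \rho^{r_n} = 1 - 2^{-r_n}.
\]
Since $r_n = 2\log_2\log n$ we have $2^{-r_n} = (\log n)^{-2}$, so this probability is at least $1 - (\log n)^{-2}$. Subtracting the $o((\log n)^{-2})$ probabilities of the two bad events isolated in the first paragraph (the walk needing more than $Cr_n$ steps, and the neighbourhood failing to be a tree), the two particles reach distance $5r_n$ before hitting each other with probability at least $1 - 2(\log n)^{-2}$ once $n$ is large, as claimed.

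The step I expect to be the (mild) obstacle is making the reduction to a genuine tree airtight: one has to use the estimates of the previous subsection to ensure that the $O(r_n)$-neighbourhood contains at most one collision and that this single extra edge does not spoil the ``at least two outgoing neighbours'' property underlying the $\ge 2/3$ bound --- equivalently, that neither particle visits the collision edge before the process terminates. Everything else is the elementary biased-walk computation, which would go through for any voter-model perturbation under the same minimum-degree hypothesis.
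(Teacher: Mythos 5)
Your proposal is correct and takes essentially the same route as the paper: reduce to the (almost) tree neighbourhood, use minimum degree $\ge 3$ to make the inter-particle distance move away with probability $\ge 2/3$ at each jump, and apply the gambler's-ruin estimate with $\phi(z)=2^{-z}$ (the paper phrases this as optional stopping for the supermartingale $(1/2)^{Z_t}$), giving failure probability $\sim (\log n)^{-2}$. Your extra step-count localization only makes explicit what the paper handles by ``pretending the graph is exactly a tree'' and deferring the single-collision case to its remark, exactly as you flag at the end.
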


\begin{proof} For the proof we will pretend that the graph is exactly a tree up to distance $5r_n$.
Let $Z_t$ be the distance between these two particles and let $T_m$ be the first time the distance is $m$.
Note that on each jump, with probability $p\ge 2/3$, the particles get 1 step further apart,
while with probability $\le 1/3$, the particles get one step closer. This implies that
$\phi(z)=(1/2)^z$ is a supermartingale, so
$$
\phi(r_n) \ge P_{r_n}(T_0<T_{10\log_2\log n})\phi(0) + (1-P_{r_n}(T_0<T_{10\log_2\log n}))\phi(10\log_2\log n).
$$
Rearranging gives
\begin{align}
P_{r_n}(T_0<T_{10\log_2\log n}) & \le  \frac{\phi(10\log_2\log n)-\phi(r_n)}{\phi(10\log_2\log n)-\phi(0)}
\label{L1bd} \\
&=\frac{1/(\log n)^{10}-1/(\log n)^2}{1/(\log n)^{10}-1} \sim \frac{1}{(\log n)^2}\nonumber
\end{align}
as $n \to\infty$ which proves the desired result. \end{proof}

\begin{remark}
As noted after the construction, when a collision occurs it will connect a vertex at distance $k$ with one at distance $k$ or to one at distance $k+1$ that already has a neighbor at distance $k$. In the first case at distance $k$ the comparison chain moves towards $x$ with probability $\le 1/3$, the chain stays at the same distance with probability $\le 1/3$ and moves further away with probability $\ge 1/3$. In the second case at distance $k+1$ the comparison chain moves toward the root with probability $\le 2/3$ and further away with probability $\ge 1/3$.

If we have a birth and death chain $X_n$ that jumps $p(k,k+1)=p_k$, $p(k,k)=r_k$ and $p(k,k-1)=q_k$ then 
$$
\phi(k+1)-\phi(k) = \frac{q_k}{p_k} [\phi(k)-\phi(k-1)]
$$
recursively defines a function $\phi$ so that $\phi(X_n)$ is a martingale. In our comparison chain $q_k/p_k = 1/2$ for all but one value of $k$, so 
$\phi(k)/2^{-k}$ is bounded and bounded away from 0. Thus, calculations like the one in \eqref{L1bd} will work but give a slightly larger constant.
Because of this we will avoid ugliness by assuming the graph is exactly tree like. 
\end{remark}

To prepare for the next result we need

\begin{lemma} \label{LD}
If $S_k$ is the sum of $k$ independent mean one exponentials then
$$
P( S_k \le ak ) \le \left( \frac{ae}{1+a} \right)^k
$$
\end{lemma}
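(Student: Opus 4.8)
The plan is a standard exponential (Chernoff) bound for the left tail of the Gamma distribution. First I would fix $\theta>0$ and apply Markov's inequality to $e^{-\theta S_k}$: since $\{S_k\le ak\}=\{e^{-\theta S_k}\ge e^{-\theta ak}\}$, we get
$$
P(S_k\le ak)\le e^{\theta ak}\,\EE\bigl[e^{-\theta S_k}\bigr].
$$
Because $S_k$ is a sum of $k$ independent mean-one exponentials, the Laplace transform factors, and each factor equals $\EE[e^{-\theta X}]=1/(1+\theta)$ (valid since $\theta>-1$). Hence
$$
P(S_k\le ak)\le \left(\frac{e^{\theta a}}{1+\theta}\right)^{k}.
$$

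Next I would optimize the exponent $\theta a-\log(1+\theta)$ over $\theta>0$. Its derivative $a-1/(1+\theta)$ vanishes at $\theta=(1-a)/a$, which is positive precisely when $a<1$; substituting this value gives
$$
P(S_k\le ak)\le \bigl(a\,e^{\,1-a}\bigr)^{k}.
$$
Finally, using the elementary inequality $e^{a}\ge 1+a$, we have $a e^{1-a}=ae/e^{a}\le ae/(1+a)$, which is exactly the claimed bound. When $a\ge 1$ the right-hand side $ae/(1+a)$ is already at least $1$ (indeed this happens as soon as $a\ge 1/(e-1)$), so the inequality is trivially true and one may restrict to $a<1$ when choosing $\theta$.

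There is no real obstacle here. The only steps worth a sentence of care are verifying that the optimal $\theta$ is admissible — which is what forces the restriction to $a<1$, harmless since the statement is vacuous otherwise — and invoking $e^{a}\ge 1+a$ to weaken $ae^{1-a}$ to the slightly larger quantity $ae/(1+a)$ that appears in the lemma.
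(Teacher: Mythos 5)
Your proof is correct and uses essentially the same argument as the paper: a Chernoff bound applying Markov's inequality to $e^{-\theta S_k}$ with Laplace transform $(1+\theta)^{-k}$. The only difference is that the paper simply takes $\theta=1/a$, which yields the stated bound $\bigl(ae/(1+a)\bigr)^k$ directly for all $a>0$, whereas you optimize to get $\bigl(ae^{1-a}\bigr)^k$ and then weaken it via $e^{a}\ge 1+a$, at the small cost of treating $a\ge 1$ separately.
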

\begin{remark} This holds for all $a$ but is only useful when $ae/(1+a) < 1$, which holds if $a<1/2$.
\end{remark}

\begin{proof} Let $\theta>0$ and note $\int_0^\infty e^{-\theta x} e^{-x} \, dx = 1/(1+\theta)$. Using Markov's inequlaity we have
$$
e^{-\theta a k} P( S_k \le ak ) \le (1+\theta)^k
$$
Taking $\theta=1/a$ and rearranging gives the desired result. 
\end{proof}

\begin{lemma} \label{W2}
Suppose two particles are a distance $r_n=2\log_{2}\log n$. Then with high probability the two particles will not collide by time $\log^2 n$.
\end{lemma}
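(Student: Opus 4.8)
The plan is to confine every possible collision to a short ``up-phase'' near the dangerous level $r_n$, and then to check that over $[0,\log^2 n]$ there are far too few such phases for their failure probabilities to add up to anything. Throughout I work on the high-probability event that $G_n$ is locally tree-like out to distance $(1/5)\log_M n$ around every vertex, with at most the one extra edge noted after the construction (which by the remark following Lemma \ref{W1} only costs a worse constant). Since $5r_n = 10\log_2\log n \ll (1/5)\log_M n$ for large $n$, the two walks see a genuine tree whenever they are within distance $5r_n$ of one another, so Lemma \ref{W1} can be reapplied each time that happens.

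Let $Z_t$ be the distance between the two walks; each walk jumps at rate at most $M$, so $Z_t$ experiences jumps at rate at most $2M$, and each jump changes $Z_t$ by at most $1$. First I would control the number of jumps: if $N$ is the number of jumps of $Z_t$ in $[0,\log^2 n]$, then $\{N \ge k\}$ forces the sum of $k$ i.i.d.\ rate-$2M$ exponentials to be at most $\log^2 n$, so Lemma \ref{LD} (with $a = 1/2$ and $k = 4M\log^2 n$) gives $P(N > 4M\log^2 n) \le e^{-c\log^2 n}$. Next, setting $\sigma_1 = 0$, define recursively $\rho_k = \inf\{t \ge \sigma_k : Z_t = 5r_n\}$ and $\sigma_{k+1} = \inf\{t \ge \rho_k : Z_t = r_n\}$. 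Because $Z_t$ moves by at most $1$ per jump, $Z_t > r_n \ge 1$ on each interval $(\rho_k,\sigma_{k+1})$, so a collision can occur only inside some up-phase $[\sigma_k,\rho_k]$, and a collision there is precisely a failure of the event of Lemma \ref{W1} for that phase. Moreover a completed cycle $[\sigma_k,\sigma_{k+1}]$ requires $Z_t$ to climb from $r_n$ to $5r_n$ and back, hence uses at least $8r_n$ jumps; so on $\{N \le 4M\log^2 n\}$ at most $K := M\log^2 n/(4\log_2\log n) + 1$ of the times $\sigma_k$ occur before $\log^2 n$.

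Finally I would assemble the bound. By the strong Markov property at $\sigma_k$ together with Lemma \ref{W1}, $P(\sigma_k \le \log^2 n,\ \text{up-phase } k \text{ fails}) \le 2/(\log n)^2$, so
\[
P(\text{collision before } \log^2 n)\ \le\ e^{-c\log^2 n} + \sum_{k=1}^{K} \frac{2}{(\log n)^2}\ \le\ e^{-c\log^2 n} + \frac{2K}{(\log n)^2}\ =\ O(1/\log\log n),
\]
and adding the vanishing probability that $G_n$ fails to be locally tree-like proves the lemma. The one delicate point --- and the reason for the choice $r_n = 2\log_2\log n$ --- is the interplay of these two estimates: a crude union bound over the $O(\log^2 n)$ visits to level $r_n$ would contribute $O(1)$ and be useless, whereas the lower bound of $8r_n$ jumps per cycle, available precisely because $r_n \to \infty$, thins the cycles to $O(\log^2 n/\log\log n)$ and lets the sum vanish. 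Verifying that lower bound and the confinement of collisions to up-phases is routine given the tree structure, so I do not anticipate any serious obstacle beyond this accounting.
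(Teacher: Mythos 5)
Your proof is correct and is essentially the paper's argument: both decompose the time interval into excursions of the distance chain from $r_n$ out to $5r_n$, apply Lemma \ref{W1} to each excursion, and use Lemma \ref{LD} to show there are only $O(\log^2 n/\log\log n)$ such excursions by time $\log^2 n$, so the union bound gives $O(1/\log\log n)$. The only (immaterial) difference is bookkeeping: you bound the total number of jumps and charge $8r_n$ jumps per cycle, whereas the paper lower-bounds the duration of each attempt and counts how many attempts fit in time $\log^2 n$.
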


\begin{proof} A particle must make $4r_n$ jumps to go from distance $5r_n$ to $r_n$. Since jumps occur at rate 1 in the site model
and at rate $\le M$ in the edge model, the last lemma implies that the probability of $k=r_n$
jumps in time $\le ar_n/M$ is  
$$
\le (ae)^{r_n} \le 1/(\log^3 n)
$$
for large $n$ if $a$ is small enough. If we make $2M (\log^2n)/ar_n$ attempts to reach 0 before $5r_n$ starting from  $r_n$
then Lemma \ref{W1} implies that with high probability we will not be successful, while the last bound implies implies that  
this number of attempts will take time $\ge 2 \log^2 n$ with high probability. 
\end{proof} 

\begin{lemma} \label{W3}
Suppose two particles are a distance $r_n=2\log_{2}\log n$ and let $s_n/n \to 0$. 
Then with high probability the two particles will not hit by time $s_n$.
\end{lemma}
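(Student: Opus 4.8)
The plan is to split $[0,s_n]$ at time $\log^2 n$: Lemma~\ref{W2} controls the interval $[0,\log^2 n]$, and after that time the walks have mixed, so a crude first-moment estimate on the number of collisions finishes the job. Until two dual particles occupy the same vertex they move as \emph{independent} continuous-time random walks on $G_n$ (a particle sitting at a vertex $v$ only consults the Poisson arrow processes and wake-up dots attached to $v$, and these are disjoint for distinct vertices), so it suffices to bound the first meeting time $\sigma$ of two independent walks $X_s,Y_s$ with $d(X_0,Y_0)=r_n$. Write
$$
\PP(\sigma\le s_n)\le \PP(\sigma\le \log^2 n)+\PP(\log^2 n<\sigma\le s_n);
$$
the first term tends to $0$ by Lemma~\ref{W2}, and if $s_n\le\log^2 n$ we are already done.

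For the second term, I would use the observation that each jump moves one of the two walks across a single edge, so $d(X_s,Y_s)$ changes by at most one at any jump; hence the pair can pass from a positive distance to distance $0$ only through a \emph{collision}, i.e.\ a jump carrying one particle directly onto the vertex the other currently occupies. Consequently, on $\{\log^2 n<\sigma\le s_n\}$ there is at least one collision in $(\log^2 n,s_n]$, so if $N$ denotes the number of such collisions then $\PP(\log^2 n<\sigma\le s_n)\le\EE N$. Writing $N$ as the integral of its rate, bounding the collision rate given adjacency by $q(a,b)+q(b,a)\le 2$ (recall $q(a,b)\le1$ for every edge in both versions), using independence of $X$ and $Y$, and invoking the mixing bound $\Delta(s)\le 1/n$ established in Section~2.1 (valid for $s\ge C_1\log n$, in particular for $s\ge\log^2 n$), so that $\PP(X_s=a)\le 2\pi(a)$ and $\PP(Y_s=b)\le 2\pi(b)$, one obtains
$$
\EE N\le 2\int_{\log^2 n}^{s_n}\sum_a\PP(X_s=a)\sum_{b\sim a}\PP(Y_s=b)\,ds\le 8\Big(\max_a\pi(a)\Big)\Big(\sum_a\pi(a)d(a)\Big)s_n\le\frac{C\,s_n}{n},
$$
where I used $\max_a\pi(a)\le C_0/n$ and $\sum_a\pi(a)d(a)\le M$. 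Since $s_n/n\to0$ this tends to $0$, and combining with the first term proves the lemma.

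The only mildly delicate point is the reduction to counting collisions after time $\log^2 n$: it relies on the ``distance changes by at most one'' observation to exclude the walks sliding from a large distance down to $0$ without an actual collision event. The rest is routine — the two quantitative inputs are Lemma~\ref{W2} and the $O(\log n)$ mixing time coming from Theorem~\ref{GMSth} and Cheeger's inequality, and the computation is of course carried out on the high-probability event on which those bounds hold (so that ``with high probability'' in the statement absorbs the randomness of $G_n$ as well). I do not anticipate a genuine obstacle here; passing to a first-moment bound is exactly what lets us avoid any fine control of the walks beyond the fact that they are close to stationary once $s\ge\log^2 n$.
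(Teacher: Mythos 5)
Your proof is correct and follows essentially the same route as the paper: split at time $\log^2 n$, use Lemma~\ref{W2} there, and after mixing apply a first-moment bound driven by $\Delta(s)\le 1/n$. The only (cosmetic) difference is that you bound the expected number of collision jumps, whereas the paper bounds the expected time the two independent walks spend at the same site and then uses the bounded jump rates to convert that into a hitting-probability bound.
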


\begin{proof} Lemma \ref{W2} takes care of times up to $\log^2 n$. The result in \eqref{convbd} implies that 
if $n$ is large then for $t \ge \log^2 n$, $p_t(x,y) \le 2/n$.
Summing we see that if the two particle move independently the expected amount of time the 
two particles spend at the same site at times in $[\log^2 n, s_n]$ is $\le 2s_n/n \to 0$.
Since the jump rates are bounded above this implies the desired result.
\end{proof}

Later we will need the following generalization of Lemma \ref{W2}. Let $x$ and $y$ be adjacent sites on the graph.
We say that the walks starting at $x$ and $y$ do not $r$-localesce if they do not hit before one of them exits the
ball of radius $r$.

\begin{lemma} \label{Wlc}
If $x$ and $y$ do not $9 \log_2\log n$-localesce then with high probability they do not hit by time
$\log^2 n$. 
\end{lemma}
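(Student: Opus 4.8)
The plan is to reduce this to the proof of Lemma~\ref{W2}. As there, we treat the graph as a tree, since the single extra edge allowed by the construction in Section~2.2 only changes constants, by the remark after Lemma~\ref{W1}. Write $\rho_n=9\log_2\log n$ and let $Z_t$ be the distance between the two walks; on a tree every jump of either walk changes $Z$ by $\pm1$, and $Z$ increases at each of its jumps with probability $\ge 2/3$, with $0$ absorbing. The hypothesis that $x$ and $y$ do not $\rho_n$-localesce says exactly that $Z$ reaches $\rho_n$ before $0$; on that event let $\sigma$ be the first time $Z_t=\rho_n$, which is then a.s.\ finite, and note that no collision has occurred on $[0,\sigma]$. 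By the strong Markov property at $\sigma$ it is enough to prove: \emph{started from two walks at distance $\rho_n$, with probability tending to $1$ there is no collision before time $\log^2 n$.} This bound will be uniform over the starting configuration, because the estimates below use only that each walk jumps at rate $\le M$ and that $Z$ increases at each jump with probability $\ge 2/3$; hence it may be invoked at the random time $\sigma$, and combined with ``no collision on $[0,\sigma]$'' it gives no collision on $[0,\sigma+\log^2 n]\supseteq[0,\log^2 n]$, which is the assertion of the lemma (up to conditioning on the event, whose probability is bounded below).

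For the displayed claim I would run the excursion argument of Lemma~\ref{W2}, now with room to spare. Make $N=\lceil 2M\log^2 n/(a\log_2\log n)\rceil$ successive attempts, each started from $Z=\rho_n$, to hit $0$ before $Z$ reaches $5r_n=10\log_2\log n$. Since $\phi(z)=2^{-z}$ is a supermartingale for $Z$, the computation that gave \eqref{L1bd} bounds the probability that a single attempt hits $0$ first by
\[
\frac{\phi(10\log_2\log n)-\phi(9\log_2\log n)}{\phi(10\log_2\log n)-\phi(0)}
=\frac{(\log n)^{-9}-(\log n)^{-10}}{1-(\log n)^{-10}}\le 2(\log n)^{-9}
\]
for $n$ large. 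When an attempt instead reaches $10\log_2\log n$, at least $\log_2\log n$ further jumps are needed for $Z$ to return to $\rho_n$, and since jumps occur at rate $\le M$, Lemma~\ref{LD} (with $k=\log_2\log n$ and $a$ a small enough constant) shows this return takes time $\ge a\log_2\log n/M$ except with probability $\le(ae)^{\log_2\log n}\le(\log n)^{-3}$. A union bound over the first $N$ attempts then shows that with high probability none of them hits $0$ first (failure probability $\le 2N(\log n)^{-9}\to0$) and none of the $N$ return journeys is faster than $a\log_2\log n/M$ (failure probability $\le N(\log n)^{-3}\to0$). On that event $Z$ stays $\ge\rho_n\ge 1$ throughout the attempts, so no collision occurs, while the return journeys alone make the elapsed time at least $(N-1)a\log_2\log n/M\ge\log^2 n$; hence there is no collision before time $\log^2 n$. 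This proves the claim and the lemma. (An excursion that wanders above $10\log_2\log n$ only keeps $Z$ farther from $0$ and is harmless.)

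I do not expect a genuine obstacle: this is the argument of Lemma~\ref{W2}, and the larger starting distance $9\log_2\log n$ only helps. The one place needing care is the bookkeeping in the last step. Because the return journeys are short---only $\Theta(\log_2\log n)$ jumps---one must run $N=\Theta(\log^2 n/\log_2\log n)$ attempts and check that $N$ times the per-attempt failure probabilities still tends to $0$; this works with a comfortable margin, since those probabilities are $(\log n)^{-9}$ for hitting $0$ (the factor $9$ in $\rho_n=9\log_2\log n$) and, after taking the constant $a$ in Lemma~\ref{LD} small enough, $(\log n)^{-3}$ for a fast return, both of which beat the $\Theta(\log^2 n/\log_2\log n)$ factor. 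The only other point is the mild one of applying the ``distance $\rho_n$'' estimate at the random time $\sigma$, which is legitimate because, as noted, that estimate is uniform in the configuration.
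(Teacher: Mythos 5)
There is a genuine gap, and it is at the very first step of your reduction. You assert that ``$x$ and $y$ do not $9\log_2\log n$-localesce'' \emph{says exactly} that the distance chain $Z$ reaches $9\log_2\log n$ before $0$. That is not the paper's definition: not $r$-localescing means the walks do not hit before one of them exits the fixed ball $B(x,r)$ centered at the starting vertex $x$ (this is made explicit in the proof of Lemma \ref{driftbd}, where the event is written in terms of $B(x,r)$; the fixed center is essential there, since it makes $\hat{\mathds{1}}(x|y|z)$ a locally determined event for the dependency-graph bound in Lemma \ref{EY2m}). Exiting $B(x,9\log_2\log n)$ before hitting does \emph{not} imply that the pairwise distance ever reached $9\log_2\log n$: both walks could drift out toward the boundary of the ball while remaining close to each other. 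So your stopping time $\sigma$ (first time $Z=\rho_n$) need not be finite on the hypothesis event, and the strong Markov reduction to ``two walks at pairwise distance $9\log_2\log n$'' is not justified as written. Everything after that point (the excursion argument with thresholds $9\log_2\log n$ and $10\log_2\log n$, Lemma \ref{LD}, the union bound over $N=\Theta(\log^2 n/\log_2\log n)$ attempts) is fine, but it proves a statement with a different hypothesis; it is also essentially a re-derivation of Lemma \ref{W2} with larger constants rather than something new.

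The missing bridge is exactly what the paper supplies, and it is short: for a walk to exit $B(x,9\log_2\log n)$ it must make at least $\approx 9\log_2\log n$ jumps, so on the not-localescing event the two walks make more than $8\log_2\log n$ jumps without hitting; since on the (locally tree-like) graph each jump changes $Z$ by $\pm 1$ with upward probability $\ge 2/3$, with high probability $Z$ reaches $r_n=2\log_2\log n$ within those jumps if no hit occurs, i.e.\ $P(B_n\cap A_n^c)$ is small in the paper's notation. Once the walks are at pairwise distance $r_n$ without having hit, Lemma \ref{W2} gives no collision for a further time $\log^2 n$, which is the assertion. If you prefer your own excursion computation from distance $9\log_2\log n$, you would still need this bridge plus a gambler's-ruin step (as in \eqref{L1bd}) to get from $2\log_2\log n$ up to $9\log_2\log n$ before hitting $0$, which is more work than simply citing Lemma \ref{W2}; so the proposal needs this repair before it is a proof of the stated lemma.
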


\begin{proof} Let $A_n$ be the event that the walks starting from $x$ and $y$ the two particles get to a distance $2 \log_2\log n$ before they hit
and let $B_n$ be the event that they do not $9 \log_2\log n$-localesce. Since the distance
between the walks increases by 1 with probability 2/3 and decreases by 1 with probability 1/3 then with high probability
$A_n$ will occur before the total number of steps made by either particle $\le 8 \log_2\log n$. Thus $P(B_n \cap A_n^c)$ is small
and the desired result follows from Lemma \ref{W2}.
\end{proof}

\subsection{Results for the dual process}

In this section we will consider the dual process on its original time scale, i.e., jumps occur ate rate $O(1)$. 
In either version of the model, the rate at which branching occurs is $\le L/\lambda$ where $L=M^2$. (Here we are using the fact
in the edge model the degree is bounded.) Let $R_n$ be time of the $n$th branching. If $t_n=c_2\log n$ for some constant $c_2 > 0$ then
$$
P(R_1\leq t_n) \rightarrow 0 \indent as \ n \rightarrow \infty
$$
Let $N(t)$ be the number of branching events by time $\lambda t$. Comparing with a branching process we have $EN(t) \le e^{Lt}$.
The expected number of branchings in the interval $[\lambda t - t_n,\lambda t]$ is $\le e^{Lt}(c_2\log n)/\lambda$ so 
as $n \rightarrow \infty$,
\begin{equation}
P(\lambda t-R_{N(t)}\leq t_n) \rightarrow 0
\label{endspace}
\end{equation}
In the next three results $C_1$ is the constant defined in \eqref{convbd} and we make the following assumption:

\mn
(A3) Suppose there are $k$ particles in the dual at time 0, and each pair are separated by a distance $r_n =2\log_{2}\log n $.

\begin{lemma} \label{L1}
Suppose that at time 0, the first particle encounters an branching event. By time $C_1\log n$, there may be coalescences between new born particles or with their parent, but with high probability there will be no other coalescences.
\end{lemma}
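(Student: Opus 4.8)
Since $C_1$ is a fixed constant, for $n$ large we have $C_1\log n\le\log^2 n$, so it is enough to show that with high probability no disallowed coalescence occurs by time $C_1\log n$, using the hitting‑time bounds already available up to time $\log^2 n$. Label the $k$ particles present at time $0$ by $1,\dots,k$, with particle $1$ the one that branches; by (A3) they are pairwise at distance $\ge r_n$. The branching at time $0$ replaces particle $1$ by a ``parent'' sitting at particle $1$'s site together with at most $M$ ``newborns,'' each a neighbor of particle $1$. A coalescence is \emph{disallowed} exactly when it involves at least one of particles $2,\dots,k$, so it is of one of three types: (a) between two of $2,\dots,k$; (b) between the parent and one of $2,\dots,k$; (c) between a newborn and one of $2,\dots,k$. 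The number of such pairs is $O(k^2)$, which is bounded in our application, so a union bound over pairs reduces the problem to bounding the coalescence probability of a single disallowed pair.

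First I would rule out any further branching in $(0,C_1\log n]$: the branching rate per particle is $\le L/\lambda$ with $L=M^2$, there are at most $k+M$ particles, and $\lambda\gg\log n$, so the expected number of additional branchings is $\le (k+M)LC_1(\log n)/\lambda\to 0$. On this high‑probability event the dual evolves, on $(0,C_1\log n]$, as coalescing random walks among exactly the parent, the newborns, and $2,\dots,k$; moreover, since the arrows governing two particles at distinct sites are disjoint Poisson processes, any given pair moves as two \emph{independent} rate‑$O(1)$ random walks until they first meet, and the pair coalesces precisely when these independent walks meet. For a type‑(a) or type‑(b) pair the two relevant starting positions are at distance $\ge r_n$ (the parent sits where particle $1$ was), so Lemma \ref{W2} bounds by $o(1)$ the probability that they meet by time $\log^2 n$. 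For a type‑(c) pair the newborn is a neighbor of particle $1$, hence at distance $\ge r_n-1$ from each of $2,\dots,k$; the argument of Lemma \ref{W2} (via Lemma \ref{W1}) goes through verbatim with $r_n$ replaced by $r_n-1$, which only multiplies the escape estimate in \eqref{L1bd} by a bounded factor and still gives an $o(1)$ bound.

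Summing these $o(1)$ bounds over the bounded number of disallowed pairs, and adding the $o(1)$ probability of an extra branching, yields the lemma. There is no serious obstacle here; the only point needing attention is that the newborns start only $O(1)$, rather than $r_n$, away from the parent, so the $r_n$‑separation required by Lemmas \ref{W1}--\ref{W2} is not literally present for type‑(c) pairs --- this is absorbed by the harmless replacement $r_n\to r_n-1$. Everything else reduces to the random‑walk hitting‑time estimates established earlier together with the elementary fact $C_1\log n\ll\log^2 n$.
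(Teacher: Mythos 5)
Your proof is correct and follows essentially the same route as the paper, whose entire argument is ``this follows from Lemma \ref{W2}'': every disallowed pair starts at distance $\ge r_n$ (or $r_n-1$ for a newborn versus an old particle), so Lemma \ref{W2} plus a union bound over the boundedly many pairs and the observation $C_1\log n\le\log^2 n$ gives the claim. Your explicit treatment of the $r_n-1$ offset for newborns and of the absence of further branchings on $(0,C_1\log n]$ just fills in details the paper leaves implicit.
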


\begin{proof} This follows from Lemma \ref{W2}.
\end{proof}

\begin{lemma} \label{L2}
At time $C_1\log n$ all the particles are almost uniformly distributed on the graph with the bound on the total variation distance uniform over
all configurations allowed by (A3).
\end{lemma}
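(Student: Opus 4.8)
The plan is to couple the $k$ dual particles over the time interval $[0,C_1\log n]$ with $k$ \emph{independent} random walks that agree with them with probability tending to $1$ uniformly over configurations allowed by (A3), and then invoke the mixing bound \eqref{convbd} to conclude that the independent walks are nearly $\pi^{\otimes k}$-distributed. (In the edge version $\pi$ is the uniform distribution; in the site version $\pi(x)=d(x)/D$, which is what ``uniformly distributed on the graph'' should be read to mean.) Concretely, let $W=(W^1_t,\dots,W^k_t)$ be independent continuous-time random walks with the same jump rates as the dual walk (total rate $1$ out of each site in the site version, total rate $\le M$ in the edge version), started from the $k$ particle locations $x_1,\dots,x_k$, and driven by the same graphical representation as the dual. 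Then $\zeta_t$ and $W_t$ coincide up to the stopping time $\tau$, the first time that either (i) a branching event occurs in the dual, or (ii) two of the walks $W^i,W^j$ occupy the same site (at which moment the corresponding dual particles would coalesce).

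Next I would bound $\PP(\tau\le C_1\log n)$, uniformly over (A3). Branching occurs at total rate $\le kL/\lambda$ with $L=M^2$, so the chance of a branching in $[0,C_1\log n]$ is at most $kLC_1(\log n)/\lambda$, which $\to 0$ since $\lambda\gg\log n$. For a collision: each pair of the $k$ particles starts at distance $r_n=2\log_2\log n$ by (A3), so Lemma \ref{W3} applied with $s_n=C_1\log n$ (which satisfies $s_n/n\to 0$) shows that each fixed pair fails to meet by time $C_1\log n$ with probability $\to 1$; a union bound over the $\binom{k}{2}$ pairs gives $\PP(\text{some pair meets by }C_1\log n)\to 0$. (Here $C_1\log n$ lies in the range $\log^2 n\le\,\cdot\,\le o(n)$ covered by Lemmas \ref{W2} and \ref{W3}, and those lemmas use only the tree-like structure up to distance $5r_n$ established in Section 2.2, so the bounds are uniform in the starting configuration.) Hence $\PP(\tau\le C_1\log n)\to 0$ uniformly.

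Finally I would assemble the pieces by a triangle inequality. On $\{\tau>C_1\log n\}$ we have $\zeta_{C_1\log n}=W_{C_1\log n}$, so the total variation distance between the law of the $k$ particle positions and the law of $W_{C_1\log n}$ is at most $\PP(\tau\le C_1\log n)$. By \eqref{convbd}, for $n$ large $\Delta(C_1\log n)\le 1/n$, i.e. $|p_{C_1\log n}(x_i,y)-\pi(y)|\le\pi(y)/n$ for all $y$; summing over $y$ shows each $W^i_{C_1\log n}$ is within total variation $1/(2n)$ of $\pi$, and since the $W^i$ are independent, $W_{C_1\log n}$ is within total variation $k/(2n)$ of $\pi^{\otimes k}$ by subadditivity of total variation under products. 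Combining, the law of the $k$ particle positions at time $C_1\log n$ is within total variation $\PP(\tau\le C_1\log n)+k/(2n)$ of $\pi^{\otimes k}$, and this bound tends to $0$ uniformly over all configurations allowed by (A3).

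The only genuinely delicate point is the uniformity of the collision estimate over all allowed starting configurations, and that is precisely the content of Lemma \ref{W3} (built on Lemmas \ref{W1} and \ref{W2}); once those are available the present lemma is an assembly, the two morals being that $\lambda\gg\log n$ makes branchings negligible on the mixing time scale, while $C_1\log n\ll n$ makes coalescences negligible.
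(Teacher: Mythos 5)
Your argument is correct, and its core is the same as the paper's: the paper's entire proof of this lemma is the citation of \eqref{convbd}, whose bound $\Delta(C_1\log n)\le 1/n$ is a maximum over starting vertices and hence automatically uniform over all configurations allowed by (A3); that gives the marginal statement that each particle is within total variation $O(1/n)$ of $\pi$. What you do beyond this is prove a stronger joint statement -- closeness of the $k$-particle law to $\pi^{\otimes k}$ -- by coupling with independent walks and controlling branchings (rate $\le kM^2/\lambda$, negligible since $\lambda\gg\log n$) and collisions (Lemmas \ref{W1}--\ref{W3}). That extra content is not wasted: it is essentially the material the paper distributes over Lemma \ref{L1}, Lemma \ref{L3} and \eqref{endspace}, and it is what is actually needed for the later claim that the values fed into the dual are i.i.d.\ Bernoulli($u$); so your version packages into one lemma what the paper splits across several, at the cost of duplicating the collision estimates. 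Two minor points: your parenthetical that $C_1\log n$ lies in the range $\log^2 n\le\cdot\le o(n)$ is backwards, since $C_1\log n<\log^2 n$ for large $n$; this is harmless because Lemma \ref{W2} covers all times up to $\log^2 n$ (or one can cite Lemma \ref{W3} with $s_n=C_1\log n$ as you do). And your appeal to uniformity of the tree-like structure over all starting points is at the same level of rigor as the paper's, which handles the possible single collision in a neighborhood via the remark after Lemma \ref{W1} and otherwise ``assumes the graph is exactly tree like.''
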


\begin{proof} This follows from \eqref{convbd}.
\end{proof}

\begin{lemma} \label{L3}
After time $C_1\log n$, with high probability there is no coalescence between particles before the next branching event, and right before the next branching event, all the particles are $r_n$ apart away from each other.
\end{lemma}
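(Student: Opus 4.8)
\emph{Proof proposal.} The plan is to combine the mixing estimate \eqref{convbd}, the escape estimate of Lemma \ref{W3}, and a crude ball-volume count, together with the extra observation that the $O(1/\lambda_n)$ rate at which a particle branches exactly cancels the $O(\lambda_n)$ length of the time window between two consecutive branchings.

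First I would record two deterministic facts. Since every vertex has degree at most $M$, the ball of radius $r_n=2\log_2\log n$ about any vertex contains at most $2M^{r_n}$ vertices, and $M^{r_n}=M^{2\log_2\log n}$ is poly-logarithmic in $n$, hence $o(n)$; also $\pi(v)\le M/(3n)$ for every $v$ because $D=\sum_x d(x)\ge 3n$. Combining these with \eqref{convbd}: for any fixed time $s\ge C_1\log n$ and any two of the (at most $k+2$) particles present, which before coalescing perform independent random walks, the probability that they lie within distance $r_n$ of each other is at most $(1+1/n)\max_v\pi(B(v,r_n))\le 2M^{r_n}\cdot M/(3n)=:\rho_n\to 0$. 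By Lemma \ref{L2} this bound applies at the time $C_1\log n$ itself, so a union bound over the $O(1)$ pairs shows that with high probability every pair of particles is at distance $\ge r_n$ at time $C_1\log n$; call this event $E$.

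On $E$, the claim ``no coalescence before the next branching'' follows from Lemma \ref{W3}: the dual under consideration is run up to a fixed rescaled time $t$, i.e.\ original time $\lambda_n t=:s_n$, and $s_n/n\to 0$ since $\lambda_n\ll n$; applying Lemma \ref{W3} to each of the $O(1)$ pairs (which start at distance $\ge r_n$, so being further apart only helps) and taking a union bound shows that with high probability no pair collides during $[C_1\log n,\lambda_n t]$, hence none before the next branching. For the statement that all particles are pairwise at distance $\ge r_n$ right before the next branching, let $B_j$ be the event that particle $j$ branches at some instant in $[C_1\log n,\lambda_n t]$ at which another particle lies within distance $r_n$ of it. Because particle $j$ branches at instantaneous rate at most $L/\lambda_n$ no matter where the particles are (here the bounded degree is used), the expected number of such branchings of $j$ is at most $(L/\lambda_n)\int_{C_1\log n}^{\lambda_n t}P(\text{some other particle lies in }B(W_j(s),r_n))\,ds\le (L/\lambda_n)\cdot\lambda_n t\cdot(k+1)\rho_n=O(M^{r_n}/n)\to 0$, using the fixed-time bound $\rho_n$ from the previous paragraph for each $s\ge C_1\log n$. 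A union bound over the $O(1)$ values of $j$ completes the proof.

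The main obstacle is the last step: the branching time is random and correlated with the walks, so a naive union bound over the $\Theta(\lambda_n)$ jump times in the window would only succeed under the stronger hypothesis $\lambda_n\ll n/\mathrm{poly}(\log n)$. What rescues the argument under the stated hypothesis $\lambda_n\ll n$ is that the expected number of branchings of a given particle during a window of original length $\Theta(\lambda_n)$ is only $\Theta(1)$, so integrating the (configuration-independent) branching intensity against the fixed-time probability $\rho_n$ costs only a constant factor rather than a factor of $\lambda_n$.
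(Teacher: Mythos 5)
Your proposal is correct, and for the first claim (no coalescence after time $C_1\log n$ before the next branching) it follows the paper exactly: near-uniformity at time $C_1\log n$ (Lemma \ref{L2}) plus a ball-volume count gives pairwise separation $\ge r_n$, and then Lemma \ref{W3} applies over a window of original length $\lambda_n t \ll n$. Where you genuinely diverge is the second claim. The paper disposes of it in one sentence by asserting that the branching time is independent of the motion of the particles, so it simply evaluates the near-uniform law coming from \eqref{convbd} at that independent random time and concludes that some pair lies within $r_n$ only with vanishing probability. You decline to use that independence and instead run a first-moment (Campbell-type) bound: since the conditional branching intensity of each particle is uniformly bounded by $L/\lambda_n$, the expected number of branchings occurring while another particle is within distance $r_n$ is at most $(L/\lambda_n)\cdot\lambda_n t\cdot(k+1)\rho_n\to 0$, the key cancellation being that rate $O(1/\lambda_n)$ times window length $O(\lambda_n)$ is $O(1)$. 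Your route is a bit longer but more robust: the paper's independence assertion is really only approximate (the branching rate depends on the degree of the particle's current site), whereas your argument needs only the uniform upper bound $L/\lambda$ with $L=M^2$, which is exactly what the paper establishes. The one point to state carefully, though at the same level of rigor as the paper itself, is that the fixed-time proximity bound for a pair of clusters should be applied through the standard coupling with independent walks on the event that the pair has not yet coalesced (otherwise their distance is $0<r_n$); your remark that the particles ``before coalescing perform independent random walks'' is the right idea, and the no-coalescence event is exactly what the first half of your proof supplies.
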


\begin{proof} The claim about coalescence follows from Lemma \ref{W3}. The branching time
is random but it is independent of the movement of the particles, so the result about the separation between particles follows from \eqref{convbd}.
\end{proof}

Together with \eqref{endspace}, Lemma \ref{L3} implies that there is no coalescence in the dual $[R_{N(t)}, \lambda t]$ and particles are at least $r_n$ apart right before $R_{N(t)}$. According to Lemma \ref{L2}, the coalescences between new born particles and their parents can only happen before $R_{N(t)}+C_1\log n$, with no other coalescences. Lemma \ref{L2} tells us at times $\ge R_{N(t)}+C_1\log n$, all the particles are almost uniformly distributed over the graph. Thus when we feed values into the dual process to begin to compute the state of $x$ at time $t$ the values are independent and equal to 1 with probability $u$. 

\begin{lemma}
$EU^n(t)$ converges to a limit $u(t)$.
\end{lemma}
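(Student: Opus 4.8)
The plan is to show that the dual process governing $EU^n(t) = E[x(\xi_{\lambda_n t})]$ produces, in the limit $n\to\infty$, a closed system of moment equations whose solution is $u(t)$ satisfying \eqref{limODE}. Recall that to compute $E[\xi_{\lambda_n t}(x)=1]$ we run the dual backwards from $(x,\lambda_n t)$ to obtain the set $\zeta^{x,\lambda_n t}_{\lambda_n t}$ of sites at time 0 that influence $x$, then read the values of $\xi_0$ on those sites and propagate forward through the branching structure. By the discussion following Lemma \ref{L3}, when we feed values into the dual the inputs are i.i.d.\ Bernoulli($u$) — but to get a self-contained ODE we need to keep track of how branching events transform the density, which is exactly where the perturbation $\phi(u)=c_p u(1-u)(1-2u)$ enters. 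So the first step is to write $EU^n(t)$ via Dynkin's formula: $\frac{d}{dt}EU^n(t) = E[\beta(\xi_{\lambda_n t})]$, where $\beta$ is the drift defined in the statement of Theorem \ref{DarNor}, and then decompose $\beta$ into its voter part (which is a martingale term — it averages to zero because the voter flip rates $c^v_{i,j}$ are symmetric in the two opinions, or because "magnetization is conserved" in the edge case / the analogous identity holds in the site case after weighting by $\pi$) plus the perturbation part coming from $h^s$ or $h^e$.

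The second step is to evaluate the expected perturbation. The perturbation term involves, for a typical vertex $x$, indicators of the form $1_{\{\xi(y_k)=j \text{ or } \xi(y_\ell)=j\}}$ over pairs of neighbors of $x$. By Lemma \ref{W3} (with $s_n$ any sequence $\ll n$, in particular after the mixing time has elapsed since the last relevant branching), the states of $y_k$ and $y_\ell$, traced back through the dual, are governed by two dual particles that with high probability have not coalesced and are approximately at stationarity; hence they are asymptotically independent, each equal to 1 with probability $\approx U^n(t)$. Combined with $\xi(x)$ itself — which is negatively / independently correlated in the limit — a short computation of $\langle h_{2,1}-h_{1,2}\rangle_u$ over the relevant product measure yields $\phi(u)=c_p u(1-u)(1-2u)$, with $c_p$ the appropriate constant (for the site version a degree-weighted average of $2/d(x)^2 \cdot \binom{d(x)}{2}$-type terms; for the edge version the analogous sum). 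This is essentially the $\ZZ^d$ computation referred to in \eqref{Zdreact}, transplanted using the local tree structure from Section 2.2 and the hitting-time estimates of Section 2.3. The upshot is the approximate identity $E[\beta(\xi_{\lambda_n t})] = c_p\, E[U^n(t)(1-U^n(t))(1-2U^n(t))] + o(1)$, uniformly on compact time sets.

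The third step is to close the equation. Write $m(t) = EU^n(t)$. The right-hand side involves $E$ of a cubic in $U^n$, not a cubic in $m$; to replace it we need concentration of $U^n(t)$ around a deterministic value, which is precisely the content of Theorem \ref{ODElimit} — but since the present lemma is a stepping stone we should instead invoke it directly (Theorem \ref{ODElimit} is proved in this same section) or, to keep the logic clean, prove an a priori $L^2$ bound $\var(U^n(t)) \to 0$ on compact sets. The variance bound itself follows from a second-moment dual argument: two independent dual chains started at two uniformly chosen vertices meet by time $s_n \ll n$ with probability $\to 0$ (Lemma \ref{W3} again), so covariances of $\xi(x)$ and $\xi(x')$ decay and $\var\left(\frac1n\sum_x 1_{\{\xi(x)=1\}}\right)\to 0$. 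Given concentration, $E[U^n(t)(1-U^n(t))(1-2U^n(t))] = m(t)(1-m(t))(1-2m(t)) + o(1)$, so $m(t)$ satisfies $m'(t) = c_p m(t)(1-m(t))(1-2m(t)) + o(1)$ with $m(0)\to u_0$, and a Gronwall / continuity-of-solutions argument forces $m(t)\to u(t)$, the solution of \eqref{limODE}, uniformly on compacts.

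The main obstacle is the second step: making rigorous the claim that the neighbor states fed into the perturbation are asymptotically independent Bernoulli($U^n(t)$). The delicate point is that the relevant dual particles are created by a branching event only $O(\log n)$ units of (sped-up) time in the past — not the full history — so one must argue that (a) since the last branching, the $C_1\log n$ mixing time has elapsed and the particles have equilibrated (Lemmas \ref{L2}, \ref{L3}, together with \eqref{endspace} ensuring $\lambda t - R_{N(t)} \gg t_n$), and (b) the possible early coalescences among a freshly branched triple (Lemma \ref{L1}) only affect an $o(1)$ fraction of vertices and contribute an $o(1)$ error to the drift. Handling the non-tree collisions (the $O(n^{-6/5}\log^6 n)$ doubly-collided vertices and the single-collision vertices flagged in Section 2.2) requires noting they are negligible in the density average. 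Once these local estimates are assembled the rest is routine.
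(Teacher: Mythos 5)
Your route (Dynkin's formula for $\frac{d}{dt}EU^n(t)$, closure via a variance bound, then Gronwall) is genuinely different from the paper's, and it has a real gap at its first step. In the rescaled time of \eqref{dens} the voter part of the generator acts at rate $\lambda_n$, so your decomposition of $E[\beta(\xi_{\lambda_n t})]$ only works if the expected voter contribution to the drift of the \emph{unweighted} density is $o(1/\lambda_n)$, not merely $o(1)$. For the edge version this term is identically zero (magnetization is a martingale), but for the site version the unweighted density is not conserved by the voter flow: a discordant edge $\{x,y\}$ contributes $\pm\bigl(1/d(x)-1/d(y)\bigr)$ to the drift, and this vanishes in expectation only under an exact voter equilibrium fed with i.i.d.\ values (by symmetry of the non-coalescence probability of the duals from $x$ and $y$). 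The actual law at time $\lambda_n t$ deviates from such an equilibrium by terms of order $1/\lambda_n$ (the perturbation itself) plus mixing errors, and after multiplication by $\lambda_n$ these are $O(1)$, i.e.\ of the same size as the reaction term you are trying to isolate. The qualitative statements you invoke (Lemmas \ref{W3}, \ref{L2}, \ref{L3}: ``close to a voter equilibrium,'' ``approximately at stationarity'') are nowhere near the $o(1/\lambda_n)$ precision this requires, and your parenthetical ``the analogous identity holds in the site case after weighting by $\pi$'' changes the observable to the $\pi$-weighted density, hence proves a different statement unless you separately relate the two densities. Two smaller problems: invoking Theorem \ref{ODElimit} in step 3 is circular, since this lemma is a step in its proof (your alternative variance bound is the correct fix); and in step 2 the states of $x$ and of two of its neighbors are \emph{not} asymptotically independent --- their duals coalesce with probability bounded away from 0 --- so the expectation must be taken under the voter equilibrium on the local tree, which is exactly where the coalescence probabilities $\PP_k(x|v_1|v_2)$, and hence the constant $c_p$, enter; ``the relevant product measure'' would give the wrong constant.

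By contrast, the paper's proof stays entirely on the dual side: the branching--coalescing dual started from one site has $O(1)$ branchings per unit of rescaled time; Lemmas \ref{L1}--\ref{L3} together with \eqref{endspace} show that after the short post-branching coalescence window the surviving particles mix and do not meet again before the next branching, so the particle count converges to a branching process and the values fed in at time $0$ are i.i.d.; convergence of $EU^n(t)$ then follows from the results of \cite{CDP}, with the identification of the limit as the solution of \eqref{limODE} deferred to the reaction-term computation \eqref{Zdreact}. The fast voter dynamics is integrated out by the dual and never needs to be estimated to precision $o(1/\lambda_n)$. To salvage a forward generator argument you would have to either restrict to the edge version or prove that the antisymmetric part of the two-point function at neighboring sites is $o(1/\lambda_n)$, a substantial estimate not available in the paper.
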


\begin{proof} Let $Z(s)$, $s \le t$ be the number of particles in the dual process, when we impose the rule that the number of particles is not increases until 
time $(C_1 \log n)/\lambda$ after a branching event. Our results imply that $Z(s)$ converges to a branching process. The last result shows that
when when we use the dual to compute the state of $x$ at time $t$ we put independent and identically distributed values at the $Z(t)$ sites. 
The result now follows from results in \cite{CDP}.
\end{proof}

\begin{lemma} 
$U^n(t) - EU^n(t)$ converges in probability to 0.
\end{lemma}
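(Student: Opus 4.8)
The plan is to establish concentration of $U^n(t)$ around its mean by a second-moment argument, exploiting the duality between the latent voter model and coalescing branching random walks. Fix $t$. Write
$$
U^n(t) = \frac{1}{n}\sum_{x\in G_n} \eta_x, \qquad \eta_x = 1_{\{\xi_{\lambda t}(x)=1\}},
$$
so that
$$
\EE\bigl[(U^n(t)-\EE U^n(t))^2\bigr] = \frac{1}{n^2}\sum_{x,y}\bigl(\EE[\eta_x\eta_y]-\EE[\eta_x]\,\EE[\eta_y]\bigr).
$$
The diagonal terms $x=y$ contribute $O(1/n)$, which vanishes, so the whole task is to show that the off-diagonal covariances $\EE[\eta_x\eta_y]-\EE[\eta_x]\EE[\eta_y]$ are $o(1)$ for all but an $o(n^2)$ fraction of pairs $(x,y)$, and $O(1)$ for the rest.

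First I would run the two dual processes $\zeta^{x,t}$ and $\zeta^{y,t}$ on the \emph{same} graphical representation, backward from time $t$. By the analysis of Lemma \ref{W3} (applied to the distance between a particle in one dual cluster and a particle in the other), once the two dual clusters are a distance $r_n = 2\log_2\log n$ apart, with high probability they do not interact by time $s_n$ with $s_n/n\to 0$, and in particular not by the relevant time $\lambda t$ --- here we use $\lambda_n \ll n$. Moreover, by the construction in Section 2.2 the starting points $x$ and $y$ are at distance $\ge (1/5)\log_M n$ for all but $O(n)$ pairs (the number of pairs within distance $(1/5)\log_M n$ of each other is $\le n\cdot n^{1/5}\log n = o(n^2)$), so most pairs start far apart. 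Combining these two facts with the branching-rate bound $\le L/\lambda$ and \eqref{endspace}: on the good event, the two dual processes use disjoint portions of the graphical representation, hence are independent, and therefore $\EE[\eta_x\eta_y\mid\text{good}] = \EE[\eta_x]\EE[\eta_y] + o(1)$ uniformly. For the $o(n^2)$ exceptional pairs we simply bound the covariance by $1$.

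The key steps in order: (1) expand the variance as above and split into diagonal, ``far'' pairs, and ``close'' pairs; (2) for far pairs, show the two dual clusters remain at distance $\ge r_n$ until they would otherwise interact, using that each cluster has $O(1)$ particles up to time $\lambda t$ (by comparison with the branching process, as in the proof that $EU^n(t)\to u(t)$) and that each of the finitely many inter-cluster particle pairs avoids collision by time $\lambda t$ with high probability by Lemma \ref{W3}; then the two clusters read independent i.i.d.\ values from product measure at time $0$, exactly as in the previous lemma, giving near-independence of $\eta_x$ and $\eta_y$; (3) bound the contribution of close pairs and the diagonal crudely by $O(n)/n^2 + o(n^2)/n^2 = o(1)$; (4) conclude $\var(U^n(t))\to 0$, hence $U^n(t)-\EE U^n(t)\to 0$ in probability by Chebyshev, and upgrade to uniform convergence on compact sets via the already-established ODE limit and a standard tightness/monotonicity argument (or simply note that the preceding two lemmas together give Theorem \ref{ODElimit} at each fixed $t$, and uniformity on compacts follows from the Lipschitz control of the limit $u(t)$).

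The main obstacle is step (2): controlling the joint backward evolution of two dual clusters that may themselves have branched several times, and ruling out any coalescence \emph{between} the clusters. The subtlety is that the two clusters are not a priori at a fixed separation --- after branchings, newly created particles could be born ``toward'' the other cluster --- so one must argue that immediately after the short $O(C_1\log n/\lambda)$ relaxation window following each branching (Lemmas \ref{L1}--\ref{L3}), every particle in one cluster is again $\ge r_n$ from every particle in the other, uniformly, and then apply Lemma \ref{W3}. Since the number of particles in each cluster and the number of branchings in $[0,\lambda t]$ are both $O(1)$ with high probability, a union bound over the finitely many inter-cluster pairs and the finitely many branching epochs closes the argument; this is exactly the kind of bookkeeping already carried out for the single-cluster case, applied twice in parallel.
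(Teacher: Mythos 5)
Your argument is correct and is essentially the paper's own proof: a second-moment bound in which, by Lemma \ref{W3}, the dual processes started from well-separated $x$ and $y$ do not collide (so the computed values are independent and the covariance is negligible), the few close pairs are bounded crudely, and Chebyshev's inequality finishes. You simply spell out the variance decomposition and inter-cluster bookkeeping that the paper leaves implicit.
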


\begin{proof} 
It follows from Lemma \ref{W3} that if $|x-y| > r_n$ then there will be no collisions between particles in the dual processes starting from $x$ and $y$,
and hence the values we compute for $x$ and $y$ are independent. The result now follows from Chebyshev's inequality.
\end{proof}

\subsection{Computation of the reaction term}

The final step is to show that $u(t)$ satisfies the differential equation. 
On $\ZZ^d$ if  $\nu_u$ the voter model stationary distribution with density $u$ and $v_1$ and $v_2$ are randomly chosen neighbors of $x$ then 
$$
\langle h_{1,2}(x,\xi)\rangle_u = \nu_u( \xi(x) =1, \xi(v_1)=2 \hbox{ or } \xi(v_2) = 2 )
$$
The right-hand side can be computed using the duality between the voter model and coalescing random walk. Following the approach in Section 4 of \cite{RDEG}
if we let $p(x|y|z)$ be the probability the random walks starting from $x$, $y$, and $z$ never hit and $p(x|y,z)$ be the probability $y$ and $z$ coalesce but
don't hit $x$ then
$$
\nu_u( \xi(x) =1, \xi(y)=2 \hbox{ or } \xi(z) = 2 ) = p(x|y|z)u (1 - u^2) + p(x|y,z) u(1-u)
$$
Using this identity we can compute the reaction term defined in \eqref{react} 
\begin{align}
\phi(u) & = \langle h_{2,1}(x,\xi) - h_{1,2}(x,\xi) \rangle_u
\nonumber\\
&= p(x|v_1|v_2)(1-u) (1 - (1-u)^2) + p(x|v_1,v_2) u(1-u) \nonumber\\
&\qquad - [p(x|v_1|v_2)u (1 - u^2) + p(x|v_1,v_2) u(1-u)] \label{Zdreact} \\
& = p(x|v_1|v_2) [ (1-u)u(2-u) - u(1-u)(1+u) ] \nonumber\\
& = p(x|v_1|v_2) u(1-u)(1-2u) \nonumber
\end{align}

The computations for the random graph are similar but in that setting we have to take into account the degree of $x$ and what the graph looks like
locally seen from $x$. Let $q_k$ be the size-biased distribution $k p_k/\mu$ where $\mu=\sum_k p_k$ is the mean degree.
Let $\PP_{k}$ be a Galton Watson tree in which the root has degree $k$ and the other vertices have $j$ children with probability $q_{j+1}$.

In the site version a dual random walk path will spend a fraction $\pi^s(k) = q_k$ at vertices with degree $k$ so 
$$
\langle h^s_{2,1} - h^s_{1,2} \rangle_u = \sum_k q_k \PP_{k}(x|v_1|v_2) u(1-u)(1-2u)
$$
where $v_1$ and $v_2$ are randomly chosen neighbors of the root. In the edge version $\pi^e(k) = p_k$ so
$$
\langle h^e_{2,1} - h^e_{1,2} \rangle_u = \sum_k p_k \PP_{k}(x|y|z) u(1-u)(1-2u)
$$

\clearp

\section{Proof of Theorem \ref{persist}}

Recall that the density in the time-rescaled latent voter model is given by:
$$
X_t=x(\xi_{\lambda t}) = (1/n) \sum_{x\in G_n}\mathds{1}(\xi_{\lambda t}(x)=1).
$$ 
To complete the proof of Theorem \ref{persist} using the result of Darling and Norris \cite{DN} given in Theorem \ref{DarNor} we need to estimate the
probability of
\beq
\Omega_1 = \left\{ \int_0^t |\beta(X_s) - b(X_s)| \, ds  \le \eta \right\}
\label{omega1}
\eeq
where $\beta(\xi) = \sum_{\xi'\neq \xi} (x(\xi')-x(\xi)) q(\xi,\xi')$ is the drift in the particle system and $b(u) = c_pu(1-u)(1-2u)$ 
is the drift in the ODE. 

To begin to do this, we define $\tilde{\xi}(s)$ to be the same as $\xi(s)$ for time $s\leq \lambda t-C_1 \log n$, while on the time interval $(\lambda t-C_1 \log n, \ \leq \lambda t ]$, $\tilde{\xi}$ only follows the paths from voter events of $\xi$, ignoring those from branching events. Let 
$$
\tilde{X}_t=x(\tilde\xi_{\lambda t})= \frac{1}{n} \sum_{x \in G_n} 1_{ \{\tilde{\xi}_{\lambda t}(x)=1 \} }
$$ 
be the density of this new process $\tilde{\xi}$. In order to determine $\tilde{\xi}_{\lambda t}$, we run the coalescing random walks backward in time, starting from time $\lambda t$ and stopping at time $\lambda t-C_1\log n$. Since $C_1\log n/\lambda \to 0$, then with high probability the dual random walk starting from a site $x$ will not encounter any branching event in time $(\lambda t-C_1 \log n, \ \lambda t]$, so $\tilde X_t$ will be close to $X_t$ with high probability.

Let $\tilde{u} = x(\xi_{t - (C_1\log n)/\lambda})$.  Our first step toward bounding $P(\Omega_1^c)$ is.

\begin{lemma} \label{mainL}
 Suppose $\log n \ll \lambda_n \ll n$ and $m>0$. There is a $C_m$ so that for any $\delta>0$ if $n \ge n_0(k)$
\begin{equation}
\mathbb{P}\left(|\tilde{X_t} - \tilde{u}|>\epsilon |{\cal F}_{t-(C_1\log n)/\lambda}\right) \leq n^{-m} + \frac{C_m}{\ep^{2m} n^{m(1-\delta)}} 
\end{equation}
\end{lemma}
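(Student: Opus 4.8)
Work conditionally on $\mathcal F:=\mathcal F_{t-(C_1\log n)/\lambda}$ and on the graph $G_n$, using that on $(\lambda t-C_1\log n,\lambda t]$ the process $\tilde\xi$ evolves by pure voter dynamics, whose dual is a branching-free system of coalescing random walks. Running that dual backward from time $\lambda t$ for duration $s:=C_1\log n$ attaches to each vertex $x$ a lineage landing at a site $Y_x$ at time $\lambda t-C_1\log n$, so with $g:=\xi_{\lambda t-C_1\log n}\in\{0,1\}^{G_n}$ (which is $\mathcal F$-measurable) we have $\tilde X_t=\frac1n\sum_x g(Y_x)$ and $\tilde u=\frac1n\sum_y g(y)$. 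Since $s$ is exactly the mixing-time bound of \eqref{convbd}, each $Y_x$ is within $1/n$ in total variation of the stationary law, so $E[\tilde X_t\mid\mathcal F]=\sum_y\pi(y)g(y)+O(1/n)$; in the edge version the kernel is doubly stochastic and this is $\tilde u$ exactly, and in the site version one reads $\tilde u$ as the $\pi$-weighted (conserved) density. It thus suffices to control the fluctuation of $\tilde X_t$ about its conditional mean $\mu:=E[\tilde X_t\mid\mathcal F]$, for which I would use high even moments and Markov's inequality.

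Fix $m$ and put $\bar W_x:=g(Y_x)-E[g(Y_x)\mid\mathcal F]$, so $|\bar W_x|\le1$, $E[\bar W_x\mid\mathcal F]=0$, and by \eqref{convbd} $|E[g(Y_x)\mid\mathcal F]-E[g(Y_y)\mid\mathcal F]|=O(1/n)$ for all $x,y$. Expand
\[
E\!\left[(\tilde X_t-\mu)^{2m}\mid\mathcal F\right]=\frac1{n^{2m}}\sum_{\vec x\in G_n^{2m}}E\!\left[\textstyle\prod_{i=1}^{2m}\bar W_{x_i}\ \Big|\ \mathcal F\right],
\]
and estimate each summand by coupling the finitely many lineages $Y_{x_1},\dots,Y_{x_{2m}}$ to independent random walks agreeing with them until the first coalescence, then peeling indices one at a time. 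A lineage meeting none of the others may be swapped for an independent copy, making its factor mean zero; and two lineages that meet satisfy $\bar W_{x_i}\bar W_{x_j}=\bar W_{x_j}^2+O(1/n)$, turning the pair into a single $[0,1]$-valued factor of nonnegative mean and reducing to $2m-2$ lineages. Iterating shows that, up to $O(1/n)$ errors dominated by the surviving terms, $\bigl|E[\prod_i\bar W_{x_i}\mid\mathcal F]\bigr|$ is governed by the events in which the $2m$ lineages pair up through coalescence, each pairing weighted by the product over its $m$ pairs of the meeting probabilities $p_{xy}:=P(\text{lineages from }x,y\text{ meet by time }s)$. (Equivalently, expand $E[\tilde X_t^{j}\mid\mathcal F]=n^{-j}\sum_{\vec x}P(g(Y_{x_i})=1\,\forall i\mid\mathcal F)$ by the same coupling, organize its corrections by how many lineage pairs have coalesced, and note that in $\sum_j\binom{2m}{j}(-\tilde u)^{2m-j}E[\tilde X_t^{j}\mid\mathcal F]$ all corrections of order $<m$ cancel, leaving only the fully matched term.)

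Summing over $\vec x$ splits the moment in two. The tuples in which every distinct value occurs with multiplicity $\ge2$ number at most $C_m n^{m}$, each with summand $\le1$, contributing $\le C_m n^{-m}$ after the $n^{-2m}$ prefactor. The pairing part is at most $C_m n^{-2m}\bigl(\sum_{x,y}p_{xy}\bigr)^{m}$, where $\sum_{x,y}p_{xy}=E\bigl[\sum_k|A_k|(|A_k|-1)\bigr]$ for $A_k$ the clusters of the time-$s$ coalescent. The required input---exactly the short-time coalescing-random-walk estimate announced for Section 3.1---is $\sum_{x,y}p_{xy}\le n^{1+\delta}$: for pairs at distance $>r_n$ this follows from Lemmas \ref{W1}--\ref{Wlc}, while the at most $n\,\mathrm{polylog}(n)$ close or coincident pairs are handled crudely (discarding a bad event of probability $\le n^{-m}$ on which some cluster is super-polylogarithmic). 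This gives the pairing part $\le C_m n^{-2m}n^{m(1+\delta)}=C_m n^{-m(1-\delta)}$, hence $E[(\tilde X_t-\mu)^{2m}\mid\mathcal F]\le C_m(n^{-m}+n^{-m(1-\delta)})$ off the bad event; Markov's inequality, together with the $n^{-m}$ bad-event bound and $|\mu-\tilde u|=O(1/n)$, then yields the claimed estimate.

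The real obstacle is the product structure of $\bigl|E[\prod_i\bar W_{x_i}\mid\mathcal F]\bigr|$: a single use of the independent-walk coupling only bounds a tuple's summand by $\sum_{i<\ell}p_{x_ix_\ell}$, which is far too weak once $m\ge2$, and recovering a product of $m$ pairwise meeting probabilities requires either the iterated peeling above---with its delicate conditioning on one lineage's history and the $O(1/n)$ slippage between the $E[g(Y_{x_i})\mid\mathcal F]$'s---or, in the moment formulation, verifying the cancellation of the low-order corrections in the alternating sum. The only genuinely new external input is the bound $\sum_{x,y}p_{xy}\le n^{1+\delta}$ together with a companion polylogarithmic cluster-size bound for coalescing random walks on the locally tree-like graph, which is the soft content of Section 3.1.
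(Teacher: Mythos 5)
Your overall frame (run the dual only over the last $C_1\log n$ of time, center the values $g(Y_x)$ seen at time $\lambda t - C_1\log n$, bound a high even moment, finish with Markov) matches the paper's, but the engine of your argument is genuinely different from the paper's and, as written, has a gap at its core. The paper never proves, or needs, your pairing factorization: it groups the vertices into coalescence clusters of sizes $N_1,\dots,N_k$, treats the values fed to distinct clusters as i.i.d.\ Bernoulli$(\tilde u)$, so that $n\tilde X_t-n\tilde u=\sum_j Y_j$ with independent mean-zero $|Y_j|\le N_j$, and then the conditional $2m$-th moment is an elementary combinatorial estimate $\le C_m(N_{\max}n)^m$; the only probabilistic input is $P(N_{\max}>n^\delta)\le n^{-m}$ (Lemma \ref{Nmax}), which rests on the factorial-moment bounds for cluster sizes (Lemmas \ref{meanc}--\ref{mkc}). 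You instead work per vertex and assert that $\bigl|E[\prod_{i=1}^{2m}\bar W_{x_i}\mid\mathcal F]\bigr|$ is controlled, up to negligible errors, by sums over perfect matchings of products of pairwise meeting probabilities $p_{x_ix_j}$. That is precisely the step you flag as ``the real obstacle,'' and your peeling sketch does not establish it: once you restrict to (or condition on) the event that one pair of lineages meets, the remaining lineages are no longer a free coalescing system, the claimed reduction to $2m-2$ lineages does not automatically carry the factor $p_{x_ix_j}$, and the $O(1/n)$ slippages must be summed against $n^{2m}$ tuples, so ``dominated by the surviving terms'' is an assertion, not a proof. The one-shot coupling bound $\bigl|E\prod_i\bar W_{x_i}\bigr|\le\sum_{i<\ell}p_{x_ix_\ell}$ is, as you correctly note, too weak for $m\ge 2$; nothing in the paper (or in your proposal) supplies the stronger multiplicative bound, so the decisive moment estimate is currently a conjecture.

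There are also problems with the inputs you propose to feed this argument. The bound $\sum_{x,y}p_{xy}\le n^{1+\delta}$ is true, but it is exactly the first-moment cluster estimate of Lemma \ref{meanc} (summing $E(N_x(s)-1)\le 4Mes$ over $x$ with $s=C_1\log n$ gives $O(n\log n)$), proved via reversibility and the Green's-function integral; it does \emph{not} follow from Lemmas \ref{W1}--\ref{Wlc} as you claim, since those give only an $o(1)$ bound per far pair (the failure probability in Lemma \ref{W2} is of order $1/\log\log n$), and summing over the $\sim n^2$ far pairs yields $o(n^2)$, nowhere near $n^{1+\delta}$. Likewise, the ``companion polylogarithmic cluster-size bound'' holding off an event of probability $\le n^{-m}$ is not available from the paper's estimates: Markov with $k$-th factorial moments of order $(\log^2 n)^k$ only yields thresholds of size $n^{\delta}$, which is exactly why the lemma's bound carries $n^{m(1-\delta)}$ rather than anything sharper. (A smaller point: in the site version the mixed endpoint law is $\pi(y)\propto d(y)$, so the conditional mean of $\tilde X_t$ is the degree-weighted density; your parenthetical redefinition of $\tilde u$ changes the quantity appearing in the statement rather than reconciling it with the unweighted $\tilde u$.) In short: the general shape is right, but the matching/peeling moment bound is missing and its proposed supporting estimates are either mis-derived or unavailable; the paper's cluster-size route via Lemmas \ref{meanc}--\ref{mkc} and \ref{Nmax} bypasses the factorization entirely and is how the lemma is actually closed.
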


 We say that two sites at time $t$ are in the same cluster if their random walks have coalesced. The state of the process at time $\lambda t - C_1\log n$ is close to a voter model equilibrium with density $U(\lambda t - C_1\log n)$. By Lemma \ref{L2}, particles that have not coalesced by time $\lambda t - C_1\log n)$ are separated by a large distance on the graph, so if we look at the states of a fixed finite number of clusters then in the limit as $n\to\infty$ they are independent. 

Let $N_1, \ldots N_k$ be the sizes of clusters at time $\lambda t$, 
and let $\tilde{\eta}_1,\ldots, \tilde{\eta}_k$ are i.i.d with $\mathbb{P}(\tilde{\eta}_j=1)=\tilde{u}$.
\begin{align}
&n \tilde{X}_t =  \sum_{x}1_{ \{\tilde{\xi}_{\lambda t}(x)=1\} }=\sum_{j=1}^kN_j\tilde{\eta}_j
\label{sumcl}\\ 
&n\tilde{u}=\sum_{j=1}^k N_j\tilde{u}=\sum_{j=1}^k N_j\mathbb{E}\tilde\eta_j
\label{meancl}
\end{align}
If we can get a good bound on $N_{max} = \max N_i$ then we can estimate $\tilde{X}_t - \tilde{u}$.

\subsection{Bounds on cluster sizes}

Let $N_x(s)$ be the size of the cluster containing the particle that started at $x$ at time $t$ when we run the coalescing random walk to time $t-s$.
We begin by considering the edge model.

\begin{lemma} \label{meanc}
If  $s \ge 1/2M$ then $\mathbb{E}(N_x(s)-1) \le 4Mes$.
\end{lemma}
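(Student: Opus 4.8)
The plan is to reduce the bound on $\mathbb{E}(N_x(s)-1)$ to a sum of pairwise meeting probabilities for \emph{independent} random walks, and then estimate that sum using the symmetry of the walk kernel in the edge model; the whole argument is quenched, i.e.\ it works for a fixed realization of $G_n$.

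For $y\neq x$, let $q_s(x,y)$ be the probability that two independent continuous time walks started at $x$ and $y$, each jumping at rate $1$ along every incident edge, occupy a common vertex at some time $\le s$. The first step is the identity
$$
\mathbb{E}\bigl(N_x(s)-1\bigr)=\sum_{y\neq x} q_s(x,y).
$$
In the graphical construction of the coalescing walks, the lineage of $x$ and the lineage of $y$ are driven, as a pair, by the arrows pointing into their current vertices; while those vertices are distinct the two collections of arrows are disjoint, so until the two lineages first share a vertex the pair evolves exactly as two independent walks, after which it coincides forever. Hence $y$ lies in the cluster of $x$ at dual time $s$ iff the two associated independent walks have met by time $s$, and summing indicators over $y$ gives the displayed identity.

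Next I bound each $q_s(x,y)$ by an expected collision local time. Run the independent walks $(X_r)$, $(Y_r)$ forever. On the event that they first meet at a vertex $u$ at some time $\tau\le s$, they stay together for an $\mathrm{Exp}(2d(u))$ amount of time, which stochastically dominates $\mathrm{Exp}(2M)$; since $\tau\le s$, there remains a window of length $\ge s\ge 1/(2M)$ before time $2s$, so the time the two walks spend at a common vertex during $[0,2s]$ is at least $\min(\mathrm{Exp}(2M),1/(2M))$, whose mean is $\tfrac{1}{2M}(1-e^{-1})\ge \tfrac{1}{2Me}$. Taking expectations,
$$
q_s(x,y)\cdot\frac{1}{2Me}\ \le\ \mathbb{E}\int_0^{2s}\mathds{1}\{X_r=Y_r\}\,dr\ =\ \int_0^{2s}\mathbb{P}(X_r=Y_r)\,dr ,
$$
which is the only place the hypothesis $s\ge 1/(2M)$ is used. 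Finally, sum over $y$: writing $p_r$ for the transition kernel, $\mathbb{P}(X_r=Y_r)=\sum_u p_r(x,u)p_r(y,u)$ by independence, and in the edge model the jump rates $q(a,b)=\mathds{1}\{a\sim b\}$ are symmetric, so $p_r$ is symmetric and $\sum_y p_r(y,u)=\sum_y p_r(u,y)=1$ for every $u$. Therefore
$$
\sum_{y\neq x}\int_0^{2s}\mathbb{P}(X_r=Y_r)\,dr\ \le\ \int_0^{2s}\sum_u p_r(x,u)\Bigl(\sum_y p_r(y,u)\Bigr)\,dr\ =\ \int_0^{2s}\sum_u p_r(x,u)\,dr\ =\ 2s ,
$$
and combining the three steps yields $\mathbb{E}(N_x(s)-1)\le 2Me\cdot 2s=4Mes$.

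The argument is soft, so I do not anticipate a genuine obstacle; the one point needing care is the pairwise independence of distinct coalescing lineages in the first step (together with the implicit fact that on our graph the dual walks have exactly the stated symmetric kernel). An alternative route — noting that the single particle representing the cluster of $x$ absorbs another lineage at rate $O(M)$ at all times — runs into the nuisance that the size of the absorbed cluster is correlated with the absorption rate, which is precisely what the pairwise identity above sidesteps. For the site model the same scheme will work after accounting for the non-uniform stationary measure $\pi(x)=d(x)/D$, at the cost of a degree-dependent constant.
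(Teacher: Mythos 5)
Your proof is correct and takes essentially the same route as the paper's: reduce $\mathbb{E}(N_x(s)-1)$ to pairwise meeting probabilities of independent walks, convert each meeting probability into an expected collision time via the ``after meeting they stay together for time of order $1/(2M)$'' observation (costing the factor $2Me$), and sum over $y$ using the symmetry of the edge-walk kernel. The only cosmetic differences are your time horizon $2s$ in place of the paper's $s+1/(2M)$ and summing $\sum_y p_r(y,u)=1$ directly rather than passing through $p_{2r}(x,y)$.
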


\begin{proof}  
Let $y\neq x$ and $W^y$ be the edge random walk starting from $y$. Noting that when $W^x$ and $W^y$ hit, they stay together for a time $\geq 1/2M$ with probability $e^{-1}$ gives 
$$
\mathbb{P}(W^x \ and \ W^y \ hit \ by \ time \ s)\times\frac{1}{2Me}
\leq \int_0^{s+1/2M}\sum_z p_r(x,z)p_r(y,z) \, dr
$$
Since the edge random walks are reversible with respect to the uniform distribution, the transition probability is symmetric
\begin{align}
\int_0^{s+1/2M}\sum_z p_r(x,z)p_r(y,z)\, dr  &= \int_0^{s+1/2M}\sum_z p_r(x,z)p_r(z,y) \, dr\\
& = \int_0^{s+1/2M} p_{2r}(x,y) \, dr 
\end{align}
Using this we have
$$
EN_x(s) = \sum_y \mathbb{P}(W^x \ and \ W^y \ hit \ by \ time \ s) \le 2Me  \int_0^{s+1/2M}  \, dr \le 4Mes  
$$
where in the last step we have used $s \ge 1/2M$
\end{proof}

Our next step is to bound the second moment of $N_x(t)$. 

\begin{lemma} \label{m2c}
If  $s \ge 1/2M$ then $\mathbb{E}(N_x(s)-1)(N_x(s)-2) \le 3(4Mes)^2$.
\end{lemma}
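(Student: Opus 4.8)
The plan is to rerun the collision–local–time estimate from the proof of Lemma~\ref{meanc}, one step deeper. Since $(N_x(s)-1)(N_x(s)-2)$ counts ordered pairs of distinct sites $y,z\neq x$ whose walks lie in the same coalescing cluster as $x$ after time $s$,
\[
\mathbb{E}(N_x(s)-1)(N_x(s)-2)=\sum_{y,z\neq x,\ y\neq z}\mathbb{P}\bigl(x,y,z\text{ are in one cluster after time }s\bigr).
\]
A single graphical jump merges at most two clusters, so the three singletons $\{x\},\{y\},\{z\}$ coalesce in exactly two steps, and the event in the sum is the disjoint union $A_{xy}\sqcup A_{xz}\sqcup A_{yz}$, where $A_{ij}$ is the event that the walks from $i$ and $j$ meet first, at a time $\tau_{ij}\le s$, and the resulting clump then meets the walk from the remaining site by time $s$. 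Before any two of the three walks have met they move as independent random walks, and after the first merge the clump is a single random walk independent of the third; so on each $A_{ij}$ I apply the strong Markov property at $\tau_{ij}$, and I use the symmetry $p_u(a,b)=p_u(b,a)$ of the edge walk throughout. Write $G_T(a,b)=\int_0^T p_u(a,b)\,du$.

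On $A_{xy}$, condition on $(\tau_{xy},p)$, the time and place of the first meeting of the walks from $x$ and $y$. Applying the argument of Lemma~\ref{meanc} to the independent pair formed by the clump started at $p$ at time $\tau_{xy}$ and the walk from $z$—again, a pair sitting at a common site stays there for $1/2M$ with probability $e^{-1}$—gives
\[
\mathbb{P}\bigl(A_{xy}\mid\tau_{xy},p\bigr)\le 2Me\int_{\tau_{xy}}^{s+1/2M}p_{2r-\tau_{xy}}(p,z)\,dr\le Me\,G_{2s+1/M}(p,z).
\]
The key point is that $\sum_z G_T(p,z)=\int_0^T\sum_z p_u(p,z)\,du=T$, so summing over $z$ removes the meeting location and leaves $\sum_z\mathbb{P}(A_{xy})\le Me(2s+1/M)\,\mathbb{P}(\tau_{xy}\le s)$. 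Summing over $y$ and using $\sum_{y\neq x}\mathbb{P}(\tau_{xy}\le s)=\mathbb{E}(N_x(s)-1)\le 4Mes$ from Lemma~\ref{meanc}, together with $2s+1/M\le 4s$ when $s\ge 1/2M$, bounds $\sum_{y,z}\mathbb{P}(A_{xy})$, and symmetrically $\sum_{y,z}\mathbb{P}(A_{xz})$, by $16M^2es^2\le(4Mes)^2$.

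The term $A_{yz}$ is the hard part, because now the third walk starts from the \emph{fixed} site $x$, so its Green's function cannot be averaged over the second argument. Here I would absorb one extra collision local time: since the first-meeting pair remains at its meeting point for $1/2M$ with probability $e^{-1}$,
\[
\mathbb{E}\bigl[\mathds{1}(\tau_{yz}\le s)\,G_{2s+1/M}(W^y_{\tau_{yz}},x)\bigr]\le 2Me\int_0^{s+1/2M}\mathbb{E}\bigl[\mathds{1}(W^y_r=W^z_r)\,G_{2s+1/M}(W^y_r,x)\bigr]\,dr .
\]
For fixed $r$ the $n$ independent walks satisfy $\mathbb{E}\bigl[\#\{z\neq y:W^z_r=W^y_r\}\mid W^y\bigr]=1-p_r(y,W^y_r)\le1$, and $\sum_y\mathbb{E}[G_{2s+1/M}(W^y_r,x)]=\int_0^{2s+1/M}\sum_y p_{r+u}(y,x)\,du\le 2s+1/M$, so summing over $y$ and $z$ gives $\sum_{y,z}\mathbb{P}(A_{yz})\le 2M^2e^2(2s+1/M)(s+1/2M)\le16M^2e^2s^2=(4Mes)^2$. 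Adding the three contributions yields $\mathbb{E}(N_x(s)-1)(N_x(s)-2)\le(1+2/e)(4Mes)^2\le3(4Mes)^2$. The one genuine obstacle is this bookkeeping for $A_{yz}$: one must carry the meeting location inside the expectation until the extra local-time step reduces the sum to the elementary fact that the independent walks place, on average, one particle per site; everything else is a direct rerun of Lemma~\ref{meanc}.
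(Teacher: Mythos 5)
Your proof is correct and follows essentially the same route as the paper's: both decompose according to which pair coalesces first, convert each coalescence into collision local time via the ``stay together for time $1/2M$ with probability $e^{-1}$'' device, and use reversibility/symmetry of $p_r$ together with Lemma~\ref{meanc} to carry out the sums; your treatment of the $A_{yz}$ case by a second local-time step is just different bookkeeping for the paper's identity $P(z\in N_{y,r}(s))=P(y\in N_{z,r}(s))$. One small arithmetic slip: the $A_{xy}$ (and $A_{xz}$) contribution is $Me(2s+1/M)\cdot 4Mes\le 16M^2e^2s^2=(4Mes)^2$, not $16M^2es^2$, so the three cases sum to $3(4Mes)^2$ rather than $(1+2/e)(4Mes)^2$ --- which is exactly the stated bound, so nothing is lost.
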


\begin{proof}
We begin by observing that
$$
\EE (N_x(s)-1)(N_x(s)-2) = \sum_{x_1,x_2} P(x_1,x_2 \in N_x(s)).
$$
where the sum is over $x_i \neq x$ and $x_1\neq x_2$. 
We first consider the case in which $x$ and $x_1$ are the first to collide, and we bound
$$
\sum_{x_1,x_2,y,z} \int_0^{s+1/2M} p_r(x,y)p_r(x_1,y) p_r(x_2,z) P( z \in N_{y,r}(s) ) \, dr
$$
where $N_{y,r}(s)$ is the cluster at time $s$ of the random walk that starts at $y$ at time $r$. 
As in the previous proof $2Me$ times this quantity will bound the desired hitting probability. By symmetry 
$\sum_{x_2} p(x_2,z) = \sum_{x_2}p(z,x_2) = 1$. Using Lemma \ref{meanc}
$$
\sum_z P( z \in N_{y,r}(s) ) \le 4Mes
$$
Using reversibility we can write what remains of the sum as
\beq
\sum_{x_1,y}  \int_0^{s+1/2M} p_r(x,y)p_r(y,x_1) \, dr = \sum_{x_1} \int_0^{s+1/2M} p_{2r}(x,x_1) \, dr \le 2s
\label{c1end}
\eeq

The second case to consider is when $x_1$ and $x_2$ are the first to collide, and we bound
$$
\sum_{x_1,x_2,y,z} \int_0^{s+1/2M} p_r(x_1,y)p_r(x_2,y) p_r(x,z) P( z \in N_{y,r}(s) ) \, dr
$$
Using symmetry $p_r(x_1,y)p_r(x_2,y)= p_r(y,x_1)p_r(y,x_2)$ then summing over $x_1,x_2$ we have
$$
\le \sum_{y,z} \int_0^{s+1/2M} p_r(x,z) P( z \in N_{y,r}(s) ) \, dr  
$$
We have $P( z \in N_{y,r}(s) ) = P( y \in N_{z,r}(s) )$ because either event says $y$ and $z$ coalesce in $[r,s]$,
so summing over $y$ and using Lemma \ref{meanc} the above is
\beq
\le (4Mes) \sum_z  \, \int_0^{s+1/2M} p_r(x,z) \, dr  \le (4Mes) \cdot 2s \label{c2end}
\eeq
Combining our calculations proves the desired result. \end{proof}

\begin{lemma} \label{mkc}
If $s \ge 1/2M$ then $\EE[(N_x(s)-1)\cdots (N_k(s)-k)] \le C_k (4Mes)^k$ and hence
$$
\EE N_x^m(s) \le C_{m,M}(1+s)^m
$$
\end{lemma}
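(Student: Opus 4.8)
The plan is to extend the moment bounds of Lemmas \ref{meanc} and \ref{m2c} to all orders $k$ by the same first-collision decomposition, and then convert the falling-factorial bound into the ordinary moment bound by an elementary combinatorial inequality. The quantity $\EE[(N_x(s)-1)\cdots(N_x(s)-k)]$ is the expected number of ordered $k$-tuples $(x_1,\dots,x_k)$ of distinct sites, all different from $x$, that lie in the same cluster as $x$ at backward-time $s$. The key is to condition on which pair among $\{x,x_1,\dots,x_k\}$ is the \emph{first} to collide (as we run the walks backward from time $t$): after that first collision, at some time $r\le s+1/2M$ and some vertex $y$, the two merged walks become a single walk started at $(y,r)$, and the remaining $k-1$ labelled walks (those not yet absorbed) must all end up in the cluster $N_{y,r}(s)$ of that walk. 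This is exactly the recursive structure already exploited for $k=2$ in Lemma \ref{m2c}.

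First I would set up the induction. Fix the first colliding pair; there are at most $\binom{k+1}{2}$ choices, a constant depending only on $k$. For each choice the probability is bounded, via the $2Me$-times-occupation-time trick of Lemma \ref{meanc}, by an integral
$$
2Me \int_0^{s+1/2M} \sum_{y} \Big(\prod p_r(\cdot,y)\Big)\, \EE\big[\,\#\{\text{remaining }x_i\in N_{y,r}(s)\}\,\big]\, dr,
$$
where the product of transition densities involves the endpoints of the two walks in the colliding pair (and, in the case where the pair does not involve $x$, one extra free walk started at $x$ whose endpoint is summed out using $\sum_z p_r(x,z)=1$). By the induction hypothesis the inner expectation over the remaining $k-1$ labelled walks is $\le C_{k-1}(4Mes)^{k-1}$; the leftover $\sum_{x_i}$ sums collapse to $1$ by reversibility exactly as in \eqref{c1end} and \eqref{c2end}, and the time integral contributes a factor $\le 2s$. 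Collecting the constants gives $\EE[(N_x(s)-1)\cdots(N_x(s)-k)]\le C_k(4Mes)^k$ with $C_k$ depending only on $k$, which closes the induction.

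Then I would deduce the ordinary-moment bound. Since $N_x(s)\ge 1$, writing $N=N_x(s)$ one has the elementary identity $N^m=\sum_{j=0}^m S(m,j)\,(N)_j$ where $(N)_j=N(N-1)\cdots(N-j+1)$ is the falling factorial and $S(m,j)$ are Stirling numbers of the second kind; hence $\EE N^m \le \sum_{j=0}^m S(m,j)\,\EE (N)_j$. Each falling-factorial moment is $\EE(N)_j = \EE[N(N-1)\cdots(N-j+1)] \le \EE[(N-1)\cdots(N-j)]\cdot$(a shift) — more cleanly, $\EE(N)_{j+1}\le \EE[(N-1)\cdots(N-j-1)]+j\,\EE[(N-1)\cdots(N-j)]$ by expanding, so our factorial bounds give $\EE(N)_j\le C'_j(4Mes)^j$ for $s\ge 1/2M$, and for $s<1/2M$ we have $N_x(s)\le N_x(1/2M)$ trivially so the bound $C''_j$ holds with $s$ replaced by a constant. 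Summing over $j\le m$ and bounding $(4Mes)^j\le (4Me)^m(1+s)^m$ for all $0\le j\le m$ yields $\EE N_x^m(s)\le C_{m,M}(1+s)^m$.

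The main obstacle I expect is bookkeeping the first-collision decomposition cleanly at general $k$: one must be careful that ``first collision'' is well-defined (ties have probability zero for continuous-time walks), that the merged walk's future is conditionally independent of the past given $(y,r)$ by the strong Markov property, and that when the first colliding pair does \emph{not} include $x$, the extra free walk from $x$ is correctly accounted for without double-counting — this is the only place where the argument genuinely branches into the two cases seen in Lemma \ref{m2c}, and at general $k$ there are more such cases (the first pair may or may not contain $x$), but all are handled identically by reversibility. None of this is deep; the content is entirely in the $k=1,2$ cases already proved, and the extension is a routine, if slightly tedious, induction.
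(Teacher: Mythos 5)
Your proposal is correct and follows essentially the same route as the paper: an induction on $k$ via the first-collision decomposition (with the two cases according to whether the first colliding pair contains $x$), using reversibility to collapse the free sums and the bounds \eqref{c1end}, \eqref{c2end}, and then an elementary expansion of $x^m$ in factorial-type products (your Stirling-number identity is just a cosmetic variant of the paper's identity $x^m = 1 + \sum_{k=1}^m c_{m,k}(x-1)\cdots(x-k)$, which avoids your small shift step by working directly with the products bounded in the first part).
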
 

\begin{proof}
The second result follows easily from the first since
$$
x^m = 1 + \sum_{k=1}^m c_{m,k} (x-1)\cdots (x-k)
$$
The first case is
$$
\sum_{x_1,\ldots,x_k,\atop y,z_1,\ldots z_{k-1}} 
\int_0^{s+1/2M} p_r(x,y)p_r(x_1,y) p_r(x_2,z_1)\ldots p_r(x_k,z_{k-1}) P( z_1,\ldots z_{k-1} \in N_{y,r}(s) ) \, dr
$$
Using symmetry and summing over $x_2, \ldots, x_{k}$ removes the  $p_r(x_2,z_1)\ldots p_r(x_k,z_{k-1})$ from the sum. 
Next we  sum over $z_1,\ldots z_{k-1}$ (which are distinct) and use induction to bound the sum by $C_{k-1}(4Mes)^{k-1}$.
Finally we finish up by applying \eqref{c1end}.

The second case is
\begin{align*}
\sum_{x_1,\ldots,x_k,\atop y,z_1,\ldots z_{k-1}}
\int_0^{s+1/2M} p_r(x_1,y)p_r(x_2,y) p_r(x_3,z_1)\ldots p_r(x_k,z_{k-2})&\\
 p_r(x,z_{k-1}) P( z_1,\ldots z_{k-1} \in N_{y,r}(s) ) \, dr&
\end{align*}
Using symmetry and summing over $x_1,\ldots, x_k$ removes the 
$$
p_r(x_1,y)p_r(x_2,y)p_r(x_3,z_1) \ldots p_r(x_k,z_{k-2}). 
$$
As in the previous proof
$P( z_1,\ldots z_{k-1} \in N_{y,r}(s) ) = P( z_1,\ldots z_{k-2}, y \in N_{z_{k-1},r}(s) )$, so summing over $z_1,\ldots, z_{k-2}, y$ and 
using induction we can bound the sum by $C_{k-1}(4Mes)^{k-1}$. Finally we finish up by applying \eqref{c2end} with $z=z_{k-1}$
\end{proof}

\begin{remark}
To extend to the site case where we do not have symmetry, we note that reversibility of this model with respect to $\pi(y)=d(y)/D$  implies
$$
p_r(y,z) \le d(y)p_r(y,z) = d(z)p_r(z,y) \le M p_r(z,y)
$$ 
so the proof works as before but we accumulate a factor of $M$ each time we use symmetry.
\end{remark}

Now we are ready to give an upper bound on the size of the maximal cluster $N_{max}(t)$ at time $\lambda t$.
Here and for the rest of the proof of Lemma \ref{mainL}, we only use moment bounds so the proof is the same for the edge and site models 

\begin{lemma} \label{Nmax}
Let $\delta>0$ and $m < \infty$. If $t \le \log^2 n$ Then for large $n$
$$
P( N_{max}(t) > n^\delta) \le n^{-m}
$$ 
\end{lemma}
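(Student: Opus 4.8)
The plan is to estimate $\PP(N_x(t)>n^\delta)$ for a single starting site $x$ by Markov's inequality applied to a sufficiently high moment of $N_x(t)$, using the moment bound of Lemma \ref{mkc}, and then to take a crude union bound over the at most $n$ starting sites. The only genuine choice in the argument is the order $m'$ of the moment, which I would take to be an integer with $m'\delta\ge m+2$ — i.e. proportional to $m/\delta$ — so that the factor $n^{m'\delta}$ produced by Markov's inequality absorbs the union‑bound factor $n$ together with the polylogarithmic factor coming from the hypothesis $t\le\log^2 n$.

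In detail: fix $m$ and $\delta$ and pick such an $m'$. First note $N_x(s)$ is non‑decreasing in $s$ (running the coalescing walks further back in time only creates additional coalescences), so in case $t$ is very small we may replace $t$ by $\max(t,1)\ge 1/2M$; thus Lemma \ref{mkc} applies and, for $t\le\log^2 n$ and $n$ large,
\[
\EE N_x^{m'}(t)\ \le\ C_{m',M}\,(1+\log^2 n)^{m'}\ \le\ C'_{m',M}\,(\log n)^{2m'} .
\]
By Markov's inequality $\PP(N_x(t)>n^\delta)\le n^{-m'\delta}\,\EE N_x^{m'}(t)\le C'_{m',M}\,(\log n)^{2m'}\,n^{-m'\delta}$, and summing over the at most $n$ starting sites,
\[
\PP(N_{max}(t)>n^\delta)\ \le\ C'_{m',M}\,(\log n)^{2m'}\,n^{1-m'\delta}\ \le\ C'_{m',M}\,(\log n)^{2m'}\,n^{-(m+1)} ,
\]
which is at most $n^{-m}$ once $n$ is large enough that $(\log n)^{2m'}\le n$.

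I do not expect a serious obstacle here: all the substance has already been done in Lemmas \ref{meanc}--\ref{mkc}, where the multi‑particle coalescence estimates give the polynomial‑in‑$s$ control of the $m'$‑th moment of a single cluster size. The only point requiring attention is precisely that the moment order must grow with $m/\delta$ — a bound of fixed order would not survive the union bound over $n$ sites — and that the crude replacement of $t$ by $\log^2 n$ costs only a $(\log n)^{O(m')}$ factor, which is negligible next to the power of $n$ gained.
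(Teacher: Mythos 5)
Your argument is exactly the paper's: apply Markov's inequality to a moment of $N_x(t)$ of order proportional to $m/\delta$ (the paper takes $k>(m+1)/\delta$), use the bound $\EE N_x^k(t)\le C_{k,M}(1+t)^k$ from Lemma \ref{mkc} so that $t\le\log^2 n$ costs only a polylogarithmic factor, and finish with a union bound over the $n$ starting sites. Your extra remark handling $t<1/2M$ via monotonicity of $N_x(s)$ is a harmless refinement the paper leaves implicit; the proof is correct.
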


\begin{proof} By Chebyshev's inequality
$$
n^{\delta k} P( N_x(t) > n^\delta ) \le C_{k,M} (1+t)^k
$$
If we pick $k > (m+1)/\delta$ then 
$$
P\left( \max_x N_x(t) > n^\delta \right) \le \frac{n}{n^{k\delta}} C_{k,M}(2 \log^2 n)^k = o(n^{-m})
$$
which proves the desired result. 
\end{proof} 

\clearp

\subsection{Moment estimates}

\begin{proof}[Proof of Lemma \ref{mainL}]  
Based on Lemma \ref{Nmax} we let 
\begin{equation}
A_n=\{\xi: N_{max}\leq n^\delta \}
\label{defAn}
\end{equation}
To simplify formulas, let $Y_j=N_j\mathds{1}(\tilde{\xi}_j=1)-N_j\tilde{u}$. Note that $|Y_j|\leq N_j$ and $Y_1,...,Y_k$ are independent with mean 0. 
If there are $k$ clusters 
$$
\sum_x 1_{\{\tilde{\xi}(x)=1\}}-n\tilde{u} = \sum_{j=1}^k Y_j
$$
so we have 
\beq
\mathbb{E}\left[ \left( \sum_{x} 1_{ \{ \tilde{\xi}(x)=1 \} } -n \tilde{u}\right)^{\sqz 2m} ; A_n\right]
= \mathbb{E} \left(\mathbb{E}\left[\left.\left(\sum_{j=1}^k Y_j\right)^{2m}\right| N_1,...,N_k\right];A_n \right)
\label{form39}
\eeq
Writing $\bar\EE$ for the expectation conditional on $N_1,...,N_k$, we will show there is a constant $C_m$ so that
\begin{equation}
\bar\EE\left(\sum_{j=1}^k Y_j\right)^{2m}\leq C_m\left( N_{max}n\right)^{2m}
\end{equation}
Let $l$ denote the number of different $Y_i$, and let $\mathcal{I}_l$ be the set of all possible powers 
$$
\left\{\left(k_1,\ k_2, ..., k_l\right): k_1+\cdots+k_l=2m \ and \ 2\leq k_1 \leq ...\leq k_l\right\}.
$$ 
We restrict to $k_i \ge 2$ since if there is a $k_j=1$ we will have  $\mathbb{E}Y^{k_1}_{i_1}\cdots Y^{k_l}_{i_l}=0$. In the following, the subscript $*$ in $\sum_*$ means all the indices
+ $i_1,...,i_l$ are distinct.
$$
\bar\EE\left( \sum_{j=1}^k Y_j \right)^{\sqz 2m} 
\le \sum_{l\leq m}\sum_{\mathcal{I}_l}\sum_*\mathbb{E}|Y_{i_1}^{k_1}\cdots Y_{i_l}^{k_l}|
 \leq \sum_{l\leq m } \sum_{\mathcal{I}_l}\sum_*N_{i_1}^{k_1}\cdots N_{i_l}^{k_l}
$$
Note that for any fixed $(k_1, ..., k_l)\in\mathcal{I}_l$, we can always find a $(\alpha_1,...,\alpha_l)$ such that $1\leq\alpha_i < k_i, i=1,2,3,...,l$ and $\alpha_1+\cdots+\alpha_l=m$. Now factoring $N_{i_j}^{\alpha_{j}}$ out from $N_{i_j}^{k_{j}}$ and using $N_{i_j}\leq N_{max}$, we have
\beq
\bar\EE\left(\sum_{j=1}^k Y_j\right)^{\sqz 2m} 
\leq \sum_{l\leq m} N^m_{max}\sum_{\mathcal{I}_l}\sum_* N_{i_1}^{k_1-\alpha_1}\cdots N_{i_l}^{k_l-\alpha_l}
\label{sum43}
\eeq
Since
$$
\sum_* N_{i_1}^{k_1-\alpha_1}\cdots N_{i_l}^{k_l-\alpha_l}\leq(N_1+\cdots+N_k)^{\sum_{i=1}^l (k_i-\alpha_i)}=n^m
$$
\eqref{sum43} implies that
\beq
\bar\EE\left(\sum_{j=1}^k Y_j \right)^{\sqz 2m}\leq \sum_{l\leq m }N_{max}^m \sum_{\mathcal{I}_l}(N_1+\cdots+N_k)^m \leq C_m (N_{max}n)^m
\label{form46}
\eeq

Since we are restricting to $A_n$, according to \eqref{defAn}, \eqref{form39} and \eqref{form46},
$$
 \mathbb{E}\left[\left(\sum_{x}1_{\{ \tilde{\xi}(x)=1\} } -n\tilde{u}\right)^{2m} ; A_n\right]
\leq C_m n^{(1+\delta) m}
$$
Using this and Markov's inequality, 
$$
\mathbb{P}\left(|\tilde X_t - \tilde{u})|>\epsilon, A_n \right)
\leq \frac{\mathbb{E}\left[\left(\sum_{x}1_{\{ \tilde{\xi}(x)=1\} } -n\tilde{u}\right)^{2m} ; A_n\right]}
{\left(\epsilon n\right)^{2m}}\\
\leq \frac{C_m n^{(1+\delta)m}}{(\epsilon n)^{2m}}
$$
Combining this with Lemma \ref{Nmax} completes the proof of Lemma \ref{mainL}
\end{proof}

\subsection{Bounding the drift}

The drift 
\begin{align*}
\beta(\xi_t)=\frac{1}{n}\sum_{x\in G_n}\sum_{y\sim x} \sum_{z\sim x, z \neq y}
& [1_{ \{ \xi_{\lambda t}(x)=2, \ \xi_{\lambda t}(y)=1\hbox{ or }\xi_{\lambda t}(z)=1 \} }\\
&-1_{ \{ \xi_{\lambda t}(x)=1, \ \xi_{\lambda t}(y)=2 \hbox{ or }\xi_{\lambda t}=2 \} }]
\end{align*}
We want to show 

\begin{lemma} \label{driftbd}
There is a constant $C_m$ if $\delta>0$ and $n \ge n_0(\delta)$
\begin{equation}
\mathbb{P}(|\beta(\xi_t)-b(X_t)| \geq \epsilon |{\cal F}_{t- (C_1\log n)/\lambda} )\leq \frac{C_m}{\ep^{2m} n^{m(1-\delta)}}
\end{equation}
\end{lemma}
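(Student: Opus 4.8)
The plan is to follow the architecture of the proof of Lemma~\ref{mainL}, now applied to the drift functional. Write $\beta(\xi_t)=\frac1n\sum_{(x,y,z)}g(x,y,z;\xi_{\lambda t})$, where $(x,y,z)$ ranges over the $O(n)$ configurations with $y,z$ distinct neighbours of $x$ and $g(x,y,z;\xi)=1_{\{\xi(x)=2,\ \xi(y)=1\text{ or }\xi(z)=1\}}-1_{\{\xi(x)=1,\ \xi(y)=2\text{ or }\xi(z)=2\}}\in\{-1,0,1\}$. By the triangle inequality it suffices to bound, each by $\ep/3$, the quantities (i) $|\beta(\xi_t)-\beta(\tilde\xi_t)|$, (ii) $|\beta(\tilde\xi_t)-\mathbb{E}[\beta(\tilde\xi_t)\mid{\cal F}_{t-(C_1\log n)/\lambda}]|$, and (iii) $|\mathbb{E}[\beta(\tilde\xi_t)\mid{\cal F}_{t-(C_1\log n)/\lambda}]-b(X_t)|$, where $\tilde\xi$ is the auxiliary process introduced before Lemma~\ref{mainL}.

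For (i) I would observe that a triple contributes differently to $\beta(\xi_t)$ and $\beta(\tilde\xi_t)$ only when the backward dual walk of one of $x,y,z$ meets a branching dot during $(\lambda t-C_1\log n,\lambda t]$; since branchings occur along a path at rate $\le L/\lambda$ and each affected vertex spoils at most $3M^3$ triples, $|\beta(\xi_t)-\beta(\tilde\xi_t)|\le (6M^3/n)\cdot\#\{\text{affected vertices}\}$, whose mean is $\le nLC_1\log n/\lambda=o(n)$ because $\lambda_n\gg\log n$; bounding the $2m$-th moment of the number of affected vertices (the dual walks from distinct vertices are independent until they meet) gives (i) with high probability, with an error of order $(\log n/\lambda)^{2m}$, which is enough for the application of Theorem~\ref{DarNor} on a bounded time interval. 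For (iii), condition on ${\cal F}_{t-(C_1\log n)/\lambda}$ so that the configuration $\zeta=\xi_{\lambda t-C_1\log n}$ is frozen with density $\tilde u$, and run the window as coalescing random walks; as in Lemma~\ref{mainL} each cluster receives an asymptotically independent Bernoulli$(\tilde u)$ value, the representatives being within total variation $1/n$ of uniform by \eqref{convbd} and Lemma~\ref{L2}. Decomposing a fixed triple on the coalescence pattern of its three dual walks (all distinct, one pair coalesced, all coalesced), the pattern probabilities agree up to $O(n^{-c})$ with $\PP_{d(x)}(x|v_1|v_2)$ and $\PP_{d(x)}(x|v_1,v_2)$ for random neighbours $v_1,v_2$ of the root, using that the graph is locally a tree up to distance $(1/5)\log_M n$ and that recoalescence after mixing is negligible. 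Feeding i.i.d.\ Bernoulli$(\tilde u)$ values into $g$ and running the algebra of \eqref{Zdreact} gives a per-triple contribution $\PP_{d(x)}(x|v_1|v_2)\,\tilde u(1-\tilde u)(1-2\tilde u)+O(n^{-c})$; summing over triples and using $\frac1n\sum_x(\text{local factor})_x\to c_p$ (the law of large numbers for the degree distribution used in Section~2.5) yields $\mathbb{E}[\beta(\tilde\xi_t)\mid{\cal F}_{t-(C_1\log n)/\lambda}]=b(\tilde u)+O(n^{-c})$, and $|b(\tilde u)-b(X_t)|\le C(|\tilde u-\tilde X_t|+|\tilde X_t-X_t|)$ is small by Lemma~\ref{mainL}, step (i), and the Lipschitz continuity of $b$ on $[0,1]$.

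For (ii), with the cluster structure of the window fixed, $\beta(\tilde\xi_t)$ is a polynomial of degree $\le 3$ in the independent cluster values. Grouping the triples by the cluster of $x$ I would write $\beta(\tilde\xi_t)=\frac1n\sum_j W_j$ with $|W_j|\le CN_j$, subtract conditional means, expand the $2m$-th moment, and discard the vanishing terms exactly as in \eqref{form39}--\eqref{form46}: only index tuples in which every cluster that appears appears at least twice survive, each cluster may be weighted by the $\le CN_j$ triples that touch it, and factoring out powers of $N_{max}$ down to total mass $n$ gives the bound $C_m(N_{max}n)^m$ on the event $A_n=\{N_{max}\le n^\delta\}$ of \eqref{defAn}; the fluctuation of $\mathbb{E}[\beta(\tilde\xi_t)\mid\text{cluster structure},{\cal F}]$ about $\mathbb{E}[\beta(\tilde\xi_t)\mid{\cal F}]$ is handled by the same combinatorics. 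Markov's inequality together with Lemma~\ref{Nmax} then gives $\mathbb{P}(|\beta(\tilde\xi_t)-\mathbb{E}[\beta(\tilde\xi_t)\mid{\cal F}_{t-(C_1\log n)/\lambda}]|>\ep\mid{\cal F}_{t-(C_1\log n)/\lambda})\le n^{-m}+C_m\ep^{-2m}n^{-m(1-\delta)}$, and combining (i), (ii), (iii) proves the lemma.

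The hard part will be step (ii): unlike $\tilde X_t-\tilde u$, which is linear in the cluster values, $\beta(\tilde\xi_t)$ is cubic and each cluster value enters every triple meeting the cluster or one of its graph-neighbours, so the dependency bookkeeping in the $2m$-th moment expansion — deciding which index tuples survive and rewriting them so that the total-mass-$n$ reduction still produces $(N_{max}n)^m$ — is markedly more delicate than in Lemma~\ref{mainL}. A secondary difficulty is making step (iii) quantitative: the ``asymptotically independent and uniform'' approximations must be converted into a genuine $O(n^{-c})$ rate, which requires combining the mixing estimate \eqref{convbd}, the locally-tree-like estimates of Section~2.2, and a bound on recoalescence over a window of length $C_1\log n$.
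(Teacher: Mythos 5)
There is a genuine gap in your step (ii), and it is exactly where the paper's key idea lives. The quantity $\beta(\tilde\xi_t)-\mathbb{E}[\beta(\tilde\xi_t)\mid{\cal F}_{t-(C_1\log n)/\lambda}]$ has \emph{two} sources of randomness: the i.i.d.\ Bernoulli$(\tilde u)$ values fed into the clusters, and the coalescence pattern of the dual walks itself, i.e.\ the family of indicators $\mathds{1}(x|y|z)$ that the three duals of a triple do not meet during the window. Conditioning on the cluster structure only isolates the first source; the second is the fluctuation of $\frac1n\sum_{x,y\sim x,z\sim x}\mathds{1}(x|y|z)$ about $\frac1n\sum p(x|y|z)$ (compare \eqref{betaeq} and \eqref{beq}), and your one-line claim that it ``is handled by the same combinatorics'' is where the argument fails: there are no independent mean-zero summands here, the indicators $\mathds{1}(x|y|z)$ for different triples are dependent whenever any of the six walks meet, and over a window of length $C_1\log n$ (a mixing time) the walks range over the whole graph, so the dependence is global and the expansion \eqref{form39}--\eqref{form46} does not apply. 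The paper's resolution is a localization device you do not have: replace $\mathds{1}(x|y|z)$ by the $r$-localescence indicator $\hat{\mathds{1}}(x|y|z)$ with $r=9\log_2\log n$, which by Lemma \ref{Wlc} changes things only with small probability but makes the indicator a function of the walks inside a ball of polylogarithmic volume; then triples whose base points are at distance $>3r$ give exactly independent centered factors, and a dependency-graph count (Lemma \ref{EY2m}) bounds the $2m$-th moment by $(nM^2)^m(\log n)^{27m\log_2 M}$, which Markov turns into the stated $n^{-m(1-\delta)}$ rate. Without this (or some substitute such as a bounded-differences/martingale bound for the coalescing-walk functional), your step (ii) is not a proof.

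A secondary, smaller problem is your grouping $\beta(\tilde\xi_t)=\frac1n\sum_j W_j$ with $W_j$ collecting the triples whose $x$ lies in cluster $j$: each $W_j$ also depends on the values of the clusters containing the neighbors $y,z$, so the centered $W_j$ are not independent and the ``every cluster appears at least twice'' cancellation cannot be invoked verbatim; one would instead have to expand $\beta(\tilde\xi_t)$ as a multilinear polynomial in the centered cluster values $\tilde\eta_j-\tilde u$ (degree $\le 3$ per triple, at most $C_M N_j$ triples touching cluster $j$) and redo the counting. You flag this difficulty honestly, but it is left unresolved, whereas the paper sidesteps it entirely: once the coalescence pattern is controlled via $\hat{\mathds{1}}$, the value fluctuations reduce to the already-proved Lemma \ref{mainL}, which is linear in the cluster values. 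Your steps (i) and (iii) are sound in outline and consistent with the paper's \eqref{betaeq}, \eqref{beq} and the Section 2.5 computation.
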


\begin{proof} 
If we let $\mathds{1}(x|y|z)$ is the indicator function of the event that the dual random walks starting from $x$, $y$, and $z$ at time $t$ do not hit by time
$t - C_1(\log n)/\lambda$ and $p(x|y|z) = \EE\mathds{1}(x|y|z)$ then 
\begin{align}
\mathbb{E}[\beta(\xi_t)|{\cal F}_{t-C_1(\log n)/\lambda}]& \approx
\frac{1}{n}\sum_{x\in G_n}\sum_{y\sim x} \sum_{z\sim x, z\neq y}\mathds{1}(x|y|z) \tilde{u}(1-\tilde{u})(1-2\tilde{u}) 
\label{betaeq}\\
b(X_{t-C_1(\log n)/\lambda})& \approx 
\frac{1}{n}\sum_{x\in G_n}\sum_{y\sim x} \sum_{z\sim x, z\neq y}p(x|y|z) \tilde{u}(1-\tilde{u})(1-2\tilde{u}) 
\label{beq}
\end{align}
where $\approx$ means that the probability the difference $>\ep$ tends to 0 as $n\to\infty$.

The random variables $\mathds{1}(x|y|z)$ are dependent if the triples $(x,y,x)$ and $(x',y',z')$ overlap or if the associated random walks
coalesce. To simplify things we will let $\hat{\mathds{1}}(x|y|z)$ be the event none of the walks $r$-localesce (i.e., the pair collides before either of them exits $B(x,r)$. Lemma \ref{Wlc} implies that if we pick $r=9 \log_2 \log n$ then with high probability the two walks will not hit by time $\log^2 n$.

Imitating the previous proof we will let
$$
Y_{x,y,z} = \hat{\mathds{1}}(x|y|z) - \hat p(x|y|z)
$$
where $\hat p(x|y|z) = \EE (\hat{\mathds{1}}(x|y|z)$ and then compute $\EE( \sum_{x,y,z} Y_{x,y,z})^{2m}$ where the sum is over 
$x\in G_n$ and neighbors $y$, $z\neq y$ of $x$. 

\begin{lemma} \label{EY2m}
$E \left( \sum_{x,y,z} Y_{x,y,z} \right)^{2m} \le (n M^2)^m (\log n)^{27m \log_2 M}$.
\end{lemma}

\begin{proof}
The sum has $K = \sum_x d(x)(d(x)-1)$ terms. The $2m$th moment of the sum has terms of the form. 
$$
Y_{x_1,y_1,z_1} \cdots  Y_{x_{2m},y_{2m},z_{2m}} 
$$
If some $x_i$ has distance $3r$ from all of the other $x_j$ then $Y_{x_i,y_i,z_i}$ is independent of the product of the rest of
the random variables and the expected value is 0.  

Suppose now that for each $x_i$ there is at least one $x_j$ that is within distance $3r$. Create a graph $D$ (for dependency)
where there is an edge between $i$ and $j$ if $d(x_i,x_j) < 3r$. Let $\kappa$ be the number of components in the graph.
Our condition implies $\kappa \le m$. Since degree of each vertex in $G_n$ is $\le M$ the number of vertices within distance $27 \log_2 \log n$ 
of a vertex is 
$$
\le L \equiv M^{27 \log_2 \log n} = 2^{27 (\log_2\log n) \cdot \log_2 M} = (\log n)^{27 \log_2 M}
$$  
Thus the number of terms associated with graphs with $\le m$ components is
$$
\le (n M^2)^m L^m
$$
Since $\EE|Y_{x_1,y_1,z_1} \cdots  Y_{x_{2m},y_{2m},z_{2m}}| \le 1$ the desired result follows.
\end{proof}

Lemma \ref{EY2m} implies
$$
P\left( \left| \sum_{x,y,z} Y_{x,yz} \right| > \ep n \right) \le \frac{M^{2m}(\log n)^{27m \log_2 M}}{\ep^{2m} n^m}
$$
Using this with \eqref{betaeq}, \eqref{beq}, and Lemma \ref{mainL} gives Lemma \ref{driftbd}.
\end{proof}   

Taking expected value and setting $\delta=1/2$  we have shown that 
$$
\mathbb{P}(|\beta(\xi_t)-b(X_t)| \geq \epsilon |{\cal F}_{t- (C_1\log n)/\lambda} )\leq \frac{C_{m,\ep}}{n^{m/2}}
$$
To extend this to bound the probability of 
$$
\Omega_1^c = \left\{ \int_0^t |\beta(X_s)-b(X_s)| \, ds \ge \eta \right\}
$$
we subdivide the interval $[0,t]$ into subintervals of length $1/\lambda n^{1/2}$. Within each interval the probability that
more than $2n^{1/2}$ sites will flip is $\le \exp(-c\sqrt{n})$. From this it follows that when $\eta = 2t\ep$
\beq
P(\Omega_1^c) \le t\lambda n^{1/2} \left[ \frac{C_{m,\ep}}{n^{m/2}} + \exp(-c\sqrt{n}) \right]
\label{om1bd}
\eeq

\subsection{Iteration argument}

The last bound only works for fixed $t$. To get long time survival we will iterate. Let
$$
T_0 = \inf\{ t: |x_t - 1/2| < \ep \}
$$
and note that this is not random. Theorem \ref{DarNor} implies that at this time $|X_t-1/2| \le 2\ep$ with very high probability,
i.e., with an error of less that $C n^{-(m-1)/2}$ Let 
$$
T_1 = \inf\{ t > T_0 : |X_t - 1/2| \ge 4\ep \}
$$ 
and note that on $[T_0,T_1]$ we have $|X_t - 1/2| \le 4\ep$.  
There is a constant $t_0$ so that if $x(0)=1/2 + 4\ep$ or $x(0)=1/2 - 4\ep$ then
$|x(t_0)-1/2| \le \ep$. Let $S_1=T_1+t_0$. Theorem \ref{DarNor} implies that with high probability $|X(S_1)-1/2| \le 2\ep$ and
$X_t -1/2| \le 5\ep$ on $[T_1,S_1]$. For $k \ge 2$ let 
$$
T_k = \inf\{ t > S_{k-1} : |X_t - 1/2| \ge 4\ep \}\quad\hbox{and}\quad S_k=T_k+t_0. 
$$
We can with high probability iterate the construction $n^{(m-2)/2}$
times before it fails. Since each cycle takes at least $t_0$ units of time, the proof of Theorem \ref{persist} is complete.

\clearp


\begin{thebibliography}{99}


\bibitem{BGcrw}
Bramson, M., and Griffeath, D. (1980)
Asymptotics for interacting particle systems on $\ZZ^d$.
{\it Z. Wahrscheinlichkeitstheorie verw. Gebiete.} 53, 183--196

\bibitem{CooperReg}
Cooper, C., Frieze, A., and Radzik, T. (2010)
Multiple random walks in random regualr graphs.
{\it SIAM J. Discrete Math.} 23, 1738--1761

\bibitem{Cooper}
Cooper, C., Els\"asser, R., Ono, H., and Radzik, T. (2012)
Colescing random walks and voting on graphs.
arXiv:1204.4106

\bibitem{CD}
Cox, J.T., and Durrett, R. (2015)
Evolutionary games on the torus with weak selection.

\bibitem{CDP}
Cox, J.T., Durrett, R., and Perkins, E.A. (2013) Voter model perturbations and reaction diffusion equations. 
{\it Ast\'erisque.} Volume 349. arXiv:1103.1676\

\bibitem{DN}
Darling, R.W.R., and Norris, J.R. (2008) Differential equation approximation for markov chains.
{\it Probability Surveys.} 37-79

\bibitem{RGD}
Durrett, R. {\it Random Graph Dynamics.} Cambridge U. Press

\bibitem{RDEG}
Durrett, R.(2014)  Spatial evolutionary games with small selection coefficients. {\it Electronic J. Probability.} 19, paper 121

\bibitem{DurNeu} Durrett, R.; Neuhauser, C. Particle systems and reaction-diffusion equations. 
{\it Ann. Probab.} 22 (1994), no. 1, 289–333.

\bibitem{GMS}
Gkantsis, C., Mihail, M., and Saberi, A. (2003) Conductance and congestion in power law graphs. 
{\it Proceedings of the 2003 ACM SIGMETRICS international conference on measurement and modeling of computer systems}, 148--159


\bibitem{LSB}
Lambiotte R., Saramaki, J., and B;londel, V.D. (2009)
Dynamics of Latent Voters.
{\it Physical Review E.} 79, paper 046107


\bibitem{LHM}
Lieberman, E., Hauert, C., and Nowak, M.A. (2005) 
Evolutionary dynamics on graphs. 
{\it Nature.} 433, 312--316

\bibitem{Lig85}
Liggett, T.M. (1985)
{\it Interacting Particle Systems.}
Springer-Verlag, New York


\bibitem{L99}
Liggett, T.M. (1999) 
{\it Stochastic Interacting Systems: Contact, Voter and Exclusion Processes.}
Springer, New York.

\bibitem{Mar}
Maruyama, T. (1970) 
The effective number of alleles in a subdivided population. 
{\it Theor. Pop. Biol.} 1, 273--306

\bibitem{MZd}
Mountford, T. S. A metastable result for the finite multidimensional contact process. (1933) 
{\it Canad. Math. Bull.} 36 (1993), no. 2, 216–226. 


\bibitem{MMVY}
Mountford, T., Mourrat, J-C., Daniel Valesin, D., and Yao, Q.  
Exponential extinction time of the contact process on finite graphs.
arXiv:1203.2972


\bibitem{OliRRW}
Oliveira, R.I. (2012) 
On the coalescence time for reversible random walks.
{\it Transactions of the AMS.} 364, 2109--2128

\bibitem{OliMF}
Oliveira, R.I. (2013) 
Mean-field conditions for coalescing random walks.
{\it Ann. Probab.} 41, 3420--3461


\bibitem{Sawyer}
Sawyer, S. (1979)
A limit theorem for patch sizes in a selectively-neutral migration model.
{\it J. Appl. Prob.} 16, 482--495


\bibitem{SEM}
Suchecki, K., Eguu\'iluz, V.M. and Miguel, M.S. (2005) 
Conservation laws for the voter model in complex networks. 
{\it Europhysics Letters.} 69, 228--234

\end{thebibliography}
\end{document}